\newtheorem{theorem}{Theorem}[section]
\newtheorem{proposition}[theorem]{Proposition}
\theoremstyle{definition}
\newtheorem{definition}[theorem]{Definition}
\newtheorem{corollary}[theorem]{Corollary}
\newcommand{\supp}{\mathop{\mathrm{supp}}}
\def\C{\mathbb C}
\def\N{\mathbb N}
\def\S{\mathcal{S}}
\newcommand{\R}{\mathbb{R}}
\newcommand{\al}{\alpha}
\newcommand{\be}{\beta}
\newcommand{\NN}{\mathbb N}
\newcommand{\RR}{\mathbb R}
\newcommand{\DD}{\mathcal D}
 \newcommand{\bea}{\begin{eqnarray}}
 \newcommand{\eea}{\end{eqnarray}}
 \newcommand{\beas}{\begin{eqnarray*}}
 \newcommand{\eeas}{\end{eqnarray*}}
\theoremstyle{remark}
\newtheorem{remark}[theorem]{Remark}
\newcommand{\beq}{\begin{eqnarray}}
\newcommand{\eeq}{\end{eqnarray}}
\newcommand{\beqs}{\begin{eqnarray*}}
\newcommand{\eeqs}{\end{eqnarray*}}
\numberwithin{equation}{section}
\theoremstyle{remark}
\title[Directional short-time Fourier transform of ultradistributions]{Directional short-time Fourier transform of ultradistributions}
\author[S. Atanasova]{Sanja Atanasova}
\address{Faculty of Electrical Engineering and Information Technologies, Ss. Cyril and Methodius University\\  Rugjer Boshkovik 18\\
1000 Skopje\\ North Macedonia}
\email{ksanja@feit.ukim.edu.mk}
\author[S. Maksimovi\'{c}]{Snje\v{z}ana Maksimovi\'{c}}
\address{Faculty of Architecture, Civil Engineering and Geodesy, University of Banja Luka\\ Stepe Stepanovi\'{c}a 77/3 \\ 78000 Banja Luka\\ Bosnia and Hercegovina}
\email{snjezana.maksimovic@aggf.unibl.org}
\author[S. Pilipovi\'{c}]{Stevan Pilipovi\'{c}}
\address{Faculty of Sciences and Mathematics, University of Novi Sad\\ Trg D. Obradovica 4\\ 21000 Novi Sad\\ Serbia}
\email{pilipovic@dmi.uns.ac.rs}
\subjclass[2010]{42A38, 46F05, 35A18} \keywords{$k$-directional short-time Fourier transform, ultradistributions, wave front sets}
\begin{document}

\begin{abstract}
We define  and analyse the $k$-directional short-time Fourier transform and its synthesis operator over Gelfand Shilov spaces $\S^\al_{\be}(\R^n)$ and  $\S^\al_{\be}(\mathbb R^{k+n})$ respectively, and their duals. Also, we investigate
 directional regular sets and their complements - directional wave fronts, for elements of
 $\S^{\prime \al}_{\al}(\R^n)$.
\end{abstract}

\maketitle

\section{Introduction}

 Grafakos and Sansing \cite{grafakos} developed the concept of directional sensitive variant of the short-time Fourier transform (STFT) by introducing the Gabor ridge functions  that can be viewed  as a time-frequency analysis  in a  certain domain. A  slightly different version of their concept was considered by Giv in \cite{Giv}, where he defined the directional short-time Fourier transform (DSTFT).   In \cite{KS} the authors analyzed the DSTFT on Schwartz test function spaces, proving the continuity theorems and extending it to  the spaces of tempered distributions. Moreover, in \cite{APS}, the results of Giv are extended through the investigations of the STFT in the direction of $u\in\mathbb S^{n-1}$ on the distributions of exponential type and through  the analysis of the (multi)directional  wave front sets for tempered distributions.

In the past several decades, there has been a trend in investigating integral transforms on the spaces of ultradistribution, such as the wavelet transform,  STFT, Laplace and Hilbert transform, \cite{CKP, DP, KPP, Pil1, T1}, as natural generalization of these transforms over the  the spaces of distributions. In the first part of this paper, in Section \ref{se2}, we introduce on  Gelfand-Shilov spaces (of Roumieu and Beurling type) \cite{GS} the multidimensional STFT in the direction of $\bold u^k=(u_1,...,u_k)$, where $u_i, i=1,\ldots,k$ are independent vectors of $\mathbb S^{n-1}$.  Moreover, we analyze the corresponding synthesis operator. By a linear  transformation of coordinates, we simplify our exposition considering direction  $\bold e^k=(e_1,...,e_k)$. By the continuity results presented in Theorem \ref{ridir} and \ref{synthcont}, we show in Subsection \ref{temperedultra} that both transforms defined as a transposed mapings and as actions on appropriate window functions, coincide on  Gelfand Shilov (GS) spaces  of ultradistributions.

In the second part, in Section \ref{se3}, we analyze  the regularity properties of a GS ultradistribu- tions $f\in\mathcal S^{'\al}(\mathbb R^n)=\mathcal S^{\prime\al}_\al(\mathbb R^n)$ by introducing the $k$-directional regular sets and the wave front sets using the $k$-DSTFT $f$.  The main result is presented in Theorem \ref{nwr}, where we show that the $k$-directional wave front does not  depend on the window function. We also consider partial wave front in the sense of \cite{hor} and show that this notion is equivalent with the $k$-directional wave front. Actually, the partial wave front was  not considered  neither  for distribution spaces nor for GS type spaces.
More on the micro-local analysis in spaces of ultradistributions can be found in \cite{JPTT, JPTT1}, as well as in \cite{Coriasco1, Coriasco2}   and the references there in.

Let  us note that we follow  \cite{APS} (related to distributions) and give here new details which are important when developing the analysis on GS spaces. Similarly, in the last subsection when we compare directional wave fronts, we  follow \cite{PP}  but again with details related to ultradistributions which makes the proof more complex.

\subsection{Notation}
We employ the  notation $\N_0$, $\R$ and $\C$ for the sets of natural (including zero), real and complex numbers, respectively; $\mathbb{S}^{n-1}$ stands for the unit sphere of $\R^n$.
For given multi-indexes $p=(p_1,...,p_n),v=(v_1,\ldots,v_n)\in\N_0^n$ and  $t=(t_1,...,t_n)\in\R^n$, we write $$t^p=t_1^{p_1}\cdot...\cdot t_n^{p_n}, \,
(-i)^{|p|}D^p=\partial^p_t=\frac{\partial^{|p|}}{\partial t_{1}^{p_1}\cdots \partial t_{n}^{p_n}}, \, \, |p|=p_1+...+p_n,
$$
 and
for $v\leq p, \, \binom{p}{v}=\prod_{i=1}^n\binom{p_i}{v_i}$,  where $ v\leq p \ \text{means} \ v_i\leq p_i, \ i=1,...,n.$
%
Points in $\mathbb R^k$ are denoted by $\tilde x, \tilde y,...,$ while   points in $\mathbb R^{n-k}$ are denoted by $\tilde{\tilde x}, \tilde{\tilde y},\ldots$. So $$ x=(x_1,...,x_n)=(\tilde x,\tilde{\tilde x}), \mbox{ where } \tilde x=(x_1,...,x_k),\;  \tilde{\tilde x}=(x_{k+1},...,x_n).$$
Elements of $\mathbb N_0^k$ and $\mathbb N_0^{n-k}$ are denoted by tilde and double tilde as in   multidimensional  real cases.
For an open set $\Omega\subseteq\R^n$ the symbol $K\subset\subset\Omega$ means that $K$ is a compact set contained in $\Omega$. The support of
 a given function (or ultradistribution) $f$ is denoted by $\supp f$.
 We say that a function $f$ is compactly supported if there exists
 a $K\subset\subset\R^n$ such that $\supp f\subset K$. The Fourier transform of a function $f$ is defined as $\hat f(\xi)=\int_{\R^n}f(x)e^{-2\pi i\xi\cdot x}dx$, $\xi\in\R^n$.  For the dual pairing we use $\langle f,g\rangle$;  for the $L^2$ inner product of $f$ and $g$ we use the notation $(f,g)$.

\subsection{Ultradistribution spaces}
Following the approach of \cite{Kom73},  we introduce  test spaces   defined by the use of  Gevrey sequences $ M_p=p!^\alpha, \alpha>1$. All the results can be given for general sequences $(M_p)_{p\in\N}$ satisfying appropriate conditions. We only consider   spaces  of Roumieu type \cite{Kom73}. Moreover, the results of  this paper also hold for the Beurling type spaces. The  topology for them is the same  as in the case of Schwartz space of rapidly decreasing functions. Because of  that, we omit this part of analysis.

Let $K\subset\subset\Omega$ and $h>0$. We recall the definitions
of some spaces of test functions \cite{Kom73} ({of the Roumieu type}):
\begin{align*}
    \mathcal{E}^{\al}_h(K)\, &:=\, \{\varphi\in\mathcal{C}^{\infty} \left(\Omega\right)\!:\
    \sup_{{t\in K, p\in\N_0^n}}
    \frac{h^{ |p|}}{p!^\al}|D^p\varphi(t)|<\infty\};
    \\
   \mathcal{D}^{\al}_h(K)\, &:=\,
    \mathcal{E}^{\al}_h(K)\cap\{\varphi\in\mathcal{C}^{\infty}\left(\Omega\right)\!:\
    \supp\varphi\subset K\};\\
    \\
    \mathcal{E}^{\al} (K)\, &:=\,
    \varinjlim_{{h\rightarrow0}}\mathcal{E}^{\al}_h(K); \qquad
    \mathcal{E}^{\al}(\Omega)\, :=\, \varprojlim_{K\subset\subset\Omega} \mathcal{E}^{\al} (K);\\
    \mathcal{D}^{\al} (K)\, &:=\,
    \varinjlim_{{h\rightarrow0}}\mathcal{D}^{\al}_h(K); \qquad
    \mathcal{D}^{\al}(\Omega)\, :=\, \varinjlim_{K\subset\subset\Omega} \mathcal{D}^{\al} (K).
  \end{align*}

 The spaces of {ultradistributions} $ \mathcal{D}^{\prime\al}(\Omega)$ is a strong dual of $\mathcal{D}^{\al}(\Omega)$; its subspace $ \mathcal{E}^{\prime\al}(\Omega)$ consists of
compactly supported ultradistributions. All these spaces are complete, bornological,  Montel and Schwartz (\cite{Kom73}).

\subsubsection{Gelfand-Shilov type spaces}

Let $\al,\be,a>0$. By $(\mathcal{S}_a)^{\al}_{\be}(\R^n)$ is denoted the Banach space of all smooth functions $\varphi$ on $\R^n$ such that the norm
\begin{equation}\label{ngs}
\sigma^{\al,\be}_a(\varphi)=\sup_{t\in\R^n,p,q\in\N_0^n}\frac{a^{|p|+|q|}}{p!^\be q!^\al}|t^p \varphi^{(q)}(t)|
\end{equation}
 is finite.
The space $\mathcal{S}^\al_\be(\R^n)$ is defined as an inductive limit of the space $(\S_a)^{\al}_{\be}(\R^n)$:
$$ \mathcal{S}^\al_\be(\R^n)=\varinjlim_{{a\rightarrow0}}(\S_a)^{\al}_{\be} (\R^n).$$
This is a $DFS$ (dual of Fr\' echet Schwartz) nuclear space. Its strong dual is  called Gelfand Shilov space of  ultradistributions.
The space $\mathcal{S}^\al_\be(\R^n)$ is nontrivial if and only if $\al+\be\geq{1}$. The Fourier transform is a topological isomorphism between $\mathcal{S}^\al_\be(\R^n)$ and $\mathcal{S}^\be_\al(\R^n)$, which extends to a continuous linear transform from $\mathcal{S}^{\prime\al}_{\be}(\R^n)$ onto $\mathcal{S}^{\prime\be}_{\al}(\R^n)$.  If $\al=\be$, the  space $\mathcal{S}^\al_\al(\R^n)$ is denoted by $\S^{\al}(\R^n)$.

\begin{remark}\label{re1}We will often use  equivalent families of norms in which in \eqref{ngs}  $t^p \varphi^{(q)}(t)$ is replaced by $(t^p\varphi)^{(q)}$ or $t^q \hat{\varphi}^{(p)}$ or $(t^q\hat\varphi)^{(p)}$ (See  \cite{CKP}, \cite{Pil} and \cite{Pil1}).

\end{remark}

\subsubsection{Ultradifferential operators}\label{ultrap}
A
formal expression $P(D)=\sum_{p\in\N_0^n}
a_p D^p$ ($a_p\in\mathbb{R}$), corresponds to the
ultrapolynomial  $P(\xi)=\sum_{p\in\N_0^n} a_p
\xi^{p}$ ($\xi\in\R^n$), \cite{Kom73}. It is called an
    \emph{ultradifferential operator
    of the Roumieu type} $\alpha$
     if for every $a>0$, there exist a constant $C=C(a)>0$ such that the coefficients $a_p$ satisfy the estimate
    \begin{equation}\label{RP}
|a_p|\, \leq\,
    \frac{C a^{|p|}}{p!^{\al}}, \quad \forall p\in\N_0^n.
   \end{equation}
It is a $C^\infty$ function on $\R^n$.
We will use the following representation theorem for an $f\in
\mathcal{S}^{\prime\alpha}_\beta(\R^n):$

For any $f\in\mathcal{S}^{\prime \al}_\be(\R^n)$ there exist $P_1(D)$-ultradifferential operator of the Roumieu type $\al$, an ultrapolynomial $P_2(x)$ of Roumieu type $\be$ and an $F\in L^2(\R^n)$ so that
\begin{equation}\label{rep11}
f(x)=P_1(D)(P_2(x)F(x)).
\end{equation}
We note that  $\phi\mapsto P_1(D)\phi$ and $\phi\mapsto P_2(x)\phi$ are continuous mappings of $\mathcal S^\al_\be(\R^d)$ into itself.

   We will consider elliptic operators of this type. For them one has that the function $P(\xi)$
satisfies (\cite[Proposition 4.5]{Kom73})
   \begin{equation}\label{ulpobound}
 \left( \forall a,\;\,
   \exists C>0 \right)\;\, \forall \xi\in \R^n \quad |P(\xi )|\leq Ce^{a|\xi|^{1/\al}}.
  \end{equation}
    We point out  that $P(D)$ defines the continuous mappings on $\mathcal{S}^{\al}_{\be}(\R^n)$. Moreover
$$P(D)\varphi=\lim_{n\rightarrow \infty}\sum_{|p|<n}a_p D^p\varphi \,\, \rm{in} \,\, \mathcal{S}^{\prime\al}_{\be}(\R^n)\,\, \rm{for \, every} \, \, \varphi\in\mathcal S^{\al}_\be(\R^n).$$

\subsection{STFT and the synthesis operator on $L^2$} Let a window function $g\in L^2(\R^n)\setminus\{0\}$. Then STFT is defined by
\begin{equation*}\label{stft}
V_gf(y,\xi)=\int_{\R^n}f(t)\overline{g(t-y)}e^{-2\pi i \xi\cdot t}dt,\quad y,\xi\in\R^n,  f\in  L^2(\R^n).
\end{equation*}

The synthesis operator  $V^{*}_g$ is defined on $L^2(\R^{2n})$ by
\[ V^{*}_gF(t)=\int\int_{\R^{2n}}F(y,\xi)g_{y,\xi}(t) dyd\xi, \ \ t\in\R^n\]
where $g_{y,\xi}(t)={g(t-y)}e^{2\pi i \xi\cdot t}$. Let  $\varphi\in L^2(\R^n)$ be a synthesis window for $g$ ($(g,\varphi)\neq 0$). Then for any $f\in L^2(\R^n)$
\begin{equation}\label{as}
f(t)=\frac{1}{(g,\varphi)}\int \int_{\R^{2n}}V_gf(y,\xi)\varphi_{y,\xi}(t) dyd\xi,
\end{equation}
where $\varphi_{y,\xi}(t)=\varphi(t-y)e^{2\pi i  \xi\cdot t}$. It is well-known that if $g\in \mathcal{S}^\al_\be(\R^n)\setminus\{0\}$ is a fixed window, then $V_g:\mathcal{S}^\al_\be(\R^n)\rightarrow \mathcal{S}^{\al}_{\be}(\R^{2n})$
is a continuous mapping. Moreover, for $f\in\mathcal{S}^\al_\be(\R^n)$, equation \eqref{as} holds pointwise (see \cite{T1}).

\section{$k$-directional STFT and the $k$-directional synthesis operator}\label{se2}

We define the $k$-DSTFT and the $k$-directional synthesis operator (DSO)  for a fixed direction, over
$\mathcal{S}^\alpha_{\beta}(\R^n)$ and its dual.
\begin{definition}\label{analsyntopp} Let $\bold u^k=(u_1,\ldots,u_k),$ where $u_i, i=1,\ldots,k$ are independent vectors of $\mathbb S^{n-1}$. Let $\tilde y=(y_1,\ldots,y_k)\in\mathbb R^k$ and $g\in \mathcal{S}^\alpha_{\beta}(\R^k)\setminus\{0\}$. The $k$-DSTFT of  $f\in L^2(\R^n)$ is defined by
\begin{equation}\label{dd2}
DS_{g,\bold u^k} f(\tilde y,\xi):  = \int_{\mathbb R^n} f(t)\overline{g((u_1\cdot t,...,u_k\cdot t)-(y_1,...,y_k))} e^{-2\pi it\cdot \xi}dt
\end{equation}
 and the $k$-DSO of  $F\in L^2(\R^{2n})$ is defined by
\begin{equation}\label{ds}
DS^\ast_{g,\bold u^k} F(t)=\int_{\R^n}\int_{\R^k}F(\tilde y,\xi){g_{\bold u^k, \tilde y,\xi}}(t)d\tilde yd\xi, \quad t\in\R^n,
\end{equation}
where $g_{\bold u^k,\tilde y,\xi}(t)=g((u_1\cdot t,...,u_k\cdot t)-(y_1,\cdots,y_k))e^{2\pi i\xi\cdot t}, t\in\R^n.$
\end{definition}


Let $\varphi\in\mathcal{S}^\al_\be(\R^k)$ be the  synthesis window for  $g\in \mathcal{S}^\al_\be(\R^k)\setminus\{0\}$, which means $(g,\varphi)_{L^2(\R^k)}\neq 0.
 $
We will show in Proposition \ref{propg} that for $f\in \mathcal{S}^\al_\be(\R^n)$ the following reconstruction formula holds pointwise:
\begin{equation}\label{1rfdstft}
f(t)=\frac{1}{(g,\varphi)}\int_{\R^n}\int_{\R^k}DS_{g,\bold u^k}f(\tilde y,\xi)\varphi_{\bold u^k,y,\xi}(t) d\tilde yd\xi,
\end{equation}
where $\varphi_{\bold u^k,y,\xi}(t)=\varphi((u_1\cdot t,...,u_k\cdot t)-(y_1,\ldots, y_n))e^{2\pi i\xi\cdot t}, \ t\in\R^n.$
Relation \eqref{1rfdstft} takes the form
\[ (DS^\ast_{\varphi,\bold u^k}\circ DS_{g,\bold u^k})f=(g,\varphi)f.\]


\subsection{Coordinate transformation}
Let $A_{k,n}=[u_{i,j}]$ be a $k\times n$ matrix with rows $u^i, i=1,...,k$ and $I_{n-k,n-k}$ be the identity matrix. Let $B$ be an $n\times n$  matrix
determined by $A$ and $I_{n-k,n-k}$ so that $Bt=s$, where $$s_1=u_{1,1}t_1+\cdots+u_{1,n}t_n,\; ... ,\; s_k=u_{k,1}t_1+\cdots+u_{k,n}t_n,$$
$s_{k+1}=t_{k+1}, ..., s_n=t_n$. Clearly, this matrix is invertible (regular). Put $C=B^{-1}$ and $\bold e^k=(e_1,...,e_k),$ where $e_1=(1,0,...,0),...,  e_k=(0,...,1)$ are
unit vectors of the coordinate system of $\mathbb R^k$.  Then, with the change of variables
$t=Cs$, and $\eta=C^T\xi$ ($C^T$ is the transposed matrix for $C$), one obtains, for $f\in L^2(\mathbb R^n),$
that (\ref{dd2}) is transformed into:
\begin{equation}\label{dd22}
DS_{g,\bold u^k}f(\tilde y,\xi)=(DS_{g,\bold e^k}h(s))(\tilde y,\eta)=\int_{\mathbb R^n}h(s)\overline{g(\tilde s-\tilde y)}e^{-2\pi i s\cdot\eta}ds,
\end{equation}
where $h(s)=|C|f(Cs)$, $|C|$ is the determinant of $C$, and \eqref{ds} is transformed, for $f\in L^2(\mathbb R^{2n})$, into:
\begin{equation}\label{2ds}
DS^\ast_{g,\bold e^k} F(s)=\int_{\R^n}\int_{\R^k}F(\tilde y,\eta){g(\tilde s-\tilde y)} e^{2\pi is\cdot\eta}d\tilde yd\eta, \quad s\in\R^n.
\end{equation}
\begin{remark}
1. Let $f\in\S^{\alpha}_{\beta}(\mathbb R^n).$ Then
$h(s)=|C|f(Cs)\in\S^{\alpha}_{\beta}(\mathbb R^n).$

2. If $g(s_1,...,s_k)=g_1(s_1)\cdots g_k(s_k)\in (\S^{\alpha}_{\beta}(\mathbb R))^k, (\S^{\alpha}_{\beta}(\mathbb R))^k=\S^{\alpha}_{\beta}(\mathbb R)\times\ldots\times \S^{\alpha}_{\beta}(\mathbb R)$, then
\begin{equation*}\label{1dd2}
DS_{g,u^k} f(\tilde y,\xi): = \int_{\mathbb R^n} f(t)\overline{g_1(u_1\cdot t-y_1)}\cdots \overline{g_k(u_k\cdot t-y_k)} e^{-2\pi it\cdot \xi}dt=
\end{equation*}
$$\int_{\mathbb R^n}  h(s)\overline{g_1(s_1-y_1)}\cdots \overline{g_k(s_k-y_k)} e^{-2\pi is\cdot \mu}ds,
$$
and we call it, the partial short-time Fourier transform.
\end{remark}
\subsection{Continuity properties}

\begin{theorem}\label{ridir}
 By
$$(DS_{g,\bold e^k}h(s))(\tilde y,\eta)=H(\tilde y,\eta)=
\int_{\mathbb R^n}h(s)\overline{g(\tilde s-\tilde y)}e^{-2\pi i s\cdot\eta}ds$$
  is defined a continuous bilinear mapping
$$\S^{\alpha}_{\beta}(\mathbb R^n)\times \S^{\alpha}_{\beta}(\mathbb R^k)\rightarrow
{\S}^{\alpha}_{\beta}(\R^{k+n}),$$
$$(h,g)\mapsto H=DS_{g,\bold e^k}h. $$
\end{theorem}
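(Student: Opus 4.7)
The plan is to bound the Gelfand--Shilov seminorms $\sigma^{\al,\be}_a$ of $H = DS_{g,\bold e^k} h$ directly in terms of seminorms of $h$ on $\R^n$ and of $g$ on $\R^k$. Write the multi-indices on $\R^{k+n}$ in block form adapted to the splitting $(\tilde y,\eta)\in \R^k\times \R^n$: the polynomial weight is $r=(\tilde r,\rho)\in \N_0^k\times \N_0^n$ and the differentiation order is $p=(\tilde p,\sigma)\in \N_0^k\times \N_0^n$. Differentiating under the integral (justified by the rapid decay of $h$ and $g$) gives
$$D_{\tilde y}^{\tilde p} D_\eta^\sigma H(\tilde y,\eta) = (-1)^{|\tilde p|}(-2\pi i)^{|\sigma|}\int_{\R^n} h(s)\, s^\sigma\,\overline{(D^{\tilde p}g)(\tilde s-\tilde y)}\,e^{-2\pi i s\cdot\eta}\,ds.$$

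The two polynomial weights require different treatments. For $\tilde y^{\tilde r}$, expand using $\tilde y=\tilde s-(\tilde s-\tilde y)$ and the multinomial formula,
$$\tilde y^{\tilde r} = \sum_{\tilde v\leq \tilde r}\binom{\tilde r}{\tilde v}(-1)^{|\tilde v|}\tilde s^{\tilde r-\tilde v}(\tilde s-\tilde y)^{\tilde v},$$
so that the $\tilde s^{\tilde r-\tilde v}$-factor will be absorbed by $h$ and the $(\tilde s-\tilde y)^{\tilde v}$-factor by $g$. For $\eta^\rho$, integrate by parts $|\rho|$ times in $s$, transferring $D_s^\rho$ onto the rest of the integrand, and then apply Leibniz. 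Because the $g$-factor depends only on $\tilde s$, the $\tilde{\tilde s}$-derivatives contained in $D_s^\rho$ act only on $h$, $s^\sigma$, and $\tilde s^{\tilde r-\tilde v}$.

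Each resulting term is a product of expressions of the form $|s^a D^b h(s)|$ and $|(\tilde s-\tilde y)^{a'} D^{b'} g(\tilde s-\tilde y)|$, bounded by the Gelfand--Shilov seminorms of $h\in\S^\al_\be(\R^n)$ and $g\in\S^\al_\be(\R^k)$ respectively (using the equivalent family of seminorms from Remark \ref{re1}). To guarantee integrability in $s$ uniformly in $(\tilde y,\eta)$, one extracts an additional factor $(1+|s|)^{-n-1}$ from $h$, at the cost of slightly weakening its Gevrey constant. After collecting constants and using the standard submultiplicativity $(a+b)!^\gamma\leq 2^{\gamma(|a|+|b|)}a!^\gamma b!^\gamma$, one concludes that for every $b_1,b_2>0$ there exist $a>0$ and $C>0$ with
$$\sigma^{\al,\be}_a(H)\leq C\,\sigma^{\al,\be}_{b_1}(h)\,\sigma^{\al,\be}_{b_2}(g),$$
which yields precisely the continuity of the bilinear map into $\S^\al_\be(\R^{k+n})$ in the inductive-limit sense.

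The main obstacle will be the combinatorial bookkeeping: several nested binomial sums appear (from the decomposition of $\tilde y^{\tilde r}$, from the Leibniz expansion of $D_s^\rho$, and from distributing derivatives onto $s^\sigma$ and $\tilde s^{\tilde r-\tilde v}$), and one must verify that the constants telescope into a single Gevrey estimate of the required form. A secondary subtlety is the dimension mismatch---$g$ is defined on $\R^k$ while the $s$-integration is over $\R^n$---which is handled cleanly since the $g$-factor depends only on the first $k$ coordinates, so the $\tilde{\tilde s}$-integration only involves the Fourier kernel $e^{-2\pi i \tilde{\tilde s}\cdot \tilde{\tilde \eta}}$ (writing $\eta=(\tilde\eta,\tilde{\tilde\eta})$) and is absorbed in the integration-by-parts step on $\eta^\rho$.
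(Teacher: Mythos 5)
Your proposal follows essentially the same route as the paper's own proof: differentiate under the integral so that $\eta$-derivatives produce $s^\sigma h$ and $\tilde y$-derivatives land on $g$, handle powers of $\eta$ by integration by parts in $s$ with a Leibniz expansion (correctly observing that the $\tilde{\tilde s}$-derivatives never touch the $g$-factor), split the power of $\tilde y$ between $h$ and $g$ (your binomial expansion of $\tilde y^{\tilde r}$ is a refined version of the paper's two-term split $\tilde y^{\tilde w}=(\tilde y^{\tilde w}-\tilde s^{\tilde w})+\tilde s^{\tilde w}$ into $J_{2,1}$ and $J_{2,2}$), and secure integrability by extracting a decaying weight from $h$ (your $(1+|s|)^{-n-1}$ plays the role of the paper's inserted factor $s^{(2,\dots,2)}$, and is in fact the safer choice, since for $n\geq 2$ the constant $\int_{|s|>1}ds/s^{(2,\dots,2)}$ invoked by the paper diverges near the coordinate hyperplanes). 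One caveat: exactly as in the paper's displayed estimates---which pair the weight $\eta^v$ with $v!^{\al}$ and the derivative $\partial^p_\eta$ with $p!^{\be}$---your computation naturally yields the pairing with $\al$ and $\be$ swapped in the $\eta$-variable (decay of $h$ controls smoothness in $\eta$, smoothness of $h$ controls decay in $\eta$), so your concluding inequality $\sigma^{\al,\be}_a(H)\leq C\,\sigma^{\al,\be}_{b_1}(h)\,\sigma^{\al,\be}_{b_2}(g)$, with \eqref{ngs} read literally on $\R^{k+n}$, is only immediate when $\al=\be$ (the case used in Section \ref{se3}); for $\al\neq\be$ both your argument and the paper's actually produce the mixed estimate, of type $\S^{\al}_{\be}$ in $\tilde y$ and $\S^{\be}_{\al}$ in $\eta$.
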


\begin{proof}
  Let $v,p\in\mathbb N_0^n, \tilde w,\tilde \gamma\in\mathbb N_0^k$,
$\eta\in\mathbb R^n, \tilde y\in\mathbb R^k$. Using \eqref{dd22}, we have
\begin{align*}
J&=\eta^v\tilde y^{\tilde w}\partial_\eta^p\partial_{\tilde y}^{\tilde \gamma} DS_{g,\bold e^k}h(\tilde y,\eta)\\
&=\eta^v\tilde y^{\tilde w}(-2\pi i)^{|p|}\int_{\R^n}(-1)^{|\tilde\gamma|}s^p h(s)\overline{g^{(\tilde \gamma)}(\tilde s-\tilde y)}e^{-2\pi is\cdot\eta}ds\\
&=\eta^v(-2\pi i)^{|p|}\int_{\R^n}(-1)^{|\tilde \gamma|}s^p h(s)\tilde y^{\tilde w}\overline{g^{(\tilde \gamma)}(\tilde s-\tilde y)}e^{-2\pi is\cdot\eta}ds
\\
&=(-2\pi i)^{|p|-|v|}(-1)^{|\tilde \gamma|}
\int_{\R^n}\frac{\partial^{v}}{\partial s^v}\left(s^p h(s)\tilde y^{\tilde w}\overline{g^{(\tilde \gamma)}(\tilde s-\tilde y)}\right)e^{-2\pi is\cdot\eta}ds\\
&=(-2\pi i)^{|p|-|v|}(-1)^{|\tilde \gamma|}\sum_{\tilde j\leq \tilde v}{{v}\choose{\tilde j}}
\int_{\R^n}
\frac{\partial^{v-\tilde j}}{\partial s^{v-\tilde j}}
(s^p h(s))(\tilde y^{\tilde w}\overline{g^{(\tilde\gamma+\tilde j)}(\tilde s-\tilde y)})e^{-2\pi is\cdot\eta}ds.
\end{align*}
Above,  $v-\tilde j=(v_1-j_1,...,v_k-j_k,\tilde{\tilde v})$. In the sequel  we will use
\begin{equation*}
(v-\tilde j)!^{\alpha}\tilde j!^\alpha\leq v!^{\alpha},\;
(\tilde j+\tilde\gamma)!^\alpha\leq 2^{\tilde j+\tilde \gamma}\tilde j!^\alpha\tilde \gamma!^\alpha.
\end{equation*}
We divide integral $\int_{\mathbb R^n}=\int_{|s|\leq1}+\int_{|s|>1}$, denote  corresponding expressions as $J=J_1+J_2$ and continue to estimate the second one $J_2$ in which we put in the numerator and denominator $s^{(2,...,2)}$.
With this preparation we will have the existence of the second integral. This will simplify the estimates inside it.

We put
$J_2=J_{2,1}+J_{2,2}$ and $r_{p}^{v,\tilde \gamma}=(-2\pi i)^{|p|-|v|}(-1)^{|\tilde \gamma|}$. Then,
$$J_{2,1}
=r_{p}^{v,\tilde \gamma}\sum_{\tilde j\leq\tilde v}{{v}\choose{\tilde j}}\int_{|s|>1}
(s^{(2,...,2)}\frac{\partial^{v-\tilde j}}{\partial s^{v-\tilde j}}
(s^p h(s))\overline{g^{(\gamma+\tilde j)}(\tilde s-\tilde y)})
(\tilde y^{\tilde w}-\tilde s^{\tilde w})e^{-2\pi is\cdot\eta}
\frac{ds}{s^{(2,...,2)}}.
$$
$$J_{2,2}=r_{p}^{v,\tilde \gamma}\sum_{\tilde j\leq\tilde v}{{v}\choose{\tilde j}}\int_{|s|>1}
 s^{\tilde w}s^{(2,...,2)}
 \frac{\partial^{v-\tilde j}}{\partial s^{v-\tilde j}}
(s^p h(s))\overline{g^{(\gamma+\tilde j)}(\tilde s-\tilde y)}
e^{-2\pi is\cdot\eta}\frac{ds}{s^{(2,...,2)}}.
$$

The use of positive constants $c_1, c_2, C_1,  C_2,$ below, will be clear from the context.

With suitable $c_1, C_1$ we have
$$\frac{c_1^{|v|+|p|+|\tilde w|+{|\tilde \gamma|}}}{v!^\al p!^{\be}\tilde \gamma!^\alpha \tilde w!^\beta}
|J_{2,1}|/|r_{p}^{v,\tilde \gamma}|\leq
$$
$$
\leq\frac{c_1^{|v|+|p|+|\tilde w|+{|\tilde \gamma|}}}{v!^\al p!^{\be}\tilde \gamma!^\alpha \tilde w!^\beta}\left|\sum_{\tilde j\leq\tilde v}{{v}\choose{\tilde j}}\int_{|s|>1}
(s^{(2,...,2)}\frac{\partial^{v-\tilde j}}{\partial s^{v-\tilde j}}
(s^p h(s))\overline{g^{(\gamma+\tilde j)}(\tilde s-\tilde y)})
(\tilde y^{\tilde w}-\tilde s^{\tilde w})e^{-2\pi is\cdot\eta}
\frac{ds}{s^{(2,...,2)}}\right|
$$
$$
\leq\sum_{\tilde j\leq\tilde v}{{v}\choose{\tilde j}}\int_{|s|>1}\frac{c_1^{|p|+|v-\tilde j|}}{(v-\tilde{j})!^\alpha p!^\beta}|s^{(2,...,2)}
\frac{\partial^{v-\tilde j}}{\partial s^{v-\tilde j}}
(s^p h(s))|
\frac{c_1^{|\tilde  w|+|\tilde\gamma+\tilde j|}|\tilde y-\tilde s|^{\tilde w}|\overline{g^{(\gamma+\tilde j)}(\tilde s-\tilde y)}|}{\tilde w!^\beta(\tilde \gamma+\tilde j)!^\alpha}\frac{ds}{s^{(2,...,2)}}\leq C_{1}.$$

Next, with suitable $c_2, C_2$,
$$\frac{c_2^{|v|+|p|+|\tilde w|+|{\tilde \gamma|}}}{v!^\al p!^{\be}\tilde \gamma!^\alpha \tilde w!^\beta}
|J_{2,2}|/|r_{p}^{v,\tilde \gamma}|
$$
$$
\leq\frac{c_2^{|v|+|p|+|\tilde w|+|{\tilde \gamma|}}}{v!^\al p!^{\be}\tilde \gamma!^\alpha \tilde w!^\beta}\left|\sum_{\tilde j\leq\tilde v}{{v}\choose{\tilde j}}\int_{|s|>1}
 s^{\tilde w}s^{(2,...,2)}
 \frac{\tilde\partial^{v-\tilde j}}{\partial s^{v-\tilde j}}
(s^p h(s))\overline{g^{(\gamma+\tilde j)}(\tilde s-\tilde y)}
e^{-2\pi is\cdot\eta}\frac{ds}{s^{(2,...,2)}}\right|
$$
$$
\leq\sum_{\tilde j\leq\tilde v}{{v}\choose{\tilde j}}\int_{|s|>1}
\frac{c_2^{|p|+|v-\tilde j|+|\tilde w|}}{(v-\tilde{j})!^\alpha p!^\beta\tilde w!^\beta}|s^{(2,...,2)}\tilde s^{\tilde w}
\frac{\partial^{v-\tilde j}}{\partial s^{v-\tilde j}}
(s^p h(s))|
\frac{c_2^{|\tilde \gamma+\tilde j|}\overline{g^{(\gamma+\tilde j)}(\tilde s-\tilde y)}}{(\tilde \gamma+\tilde j)!^\alpha}
\frac{ds}{s^{(2,...,2)}}\leq C_2.
$$
We use \eqref{ngs} and Remark \ref{re1}, that is
\begin{equation*}
\sup_{s\in\mathbb{R}^n, l,q\in\mathbb N_0^n, \tilde w\in\N_0^k}\frac{a^{|q|+|l|+|\tilde w|}|s^{(2,...,2)}\tilde s^{\tilde w}(s^l h(s))^{(q)}|}{q!^\alpha l!^\beta\tilde w!^\beta}<\infty, \mbox{ for some } a>0.
\end{equation*}

\noindent So, with $\sum_{\tilde j\leq \tilde v}{{v}\choose{\tilde j}}\leq 2^v,$ and new constants $c$ and $C$ (which includes the value $\int_{|s|>1}{ds}/s^{(2,...,2)}$), we have
$$|J_{2}|\leq C \sigma_{c}^{\alpha,\beta}(h)\sigma_{c}^{\alpha,\beta}(g).
$$
The estimate on $J_1$ can be done in an analogous fashion, and this completes the proof of the theorem.
 \end{proof}

\begin{proposition}\label{propg}
 Let $f\in \S_\be^\al(\R^n)$ and $g, \varphi\in  \mathcal{S}^\al_\be(\R^k)$. Then the  reconstruction formula \eqref{1rfdstft} holds pointwisely.
\end{proposition}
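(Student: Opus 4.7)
The plan is to reduce the identity to the classical $L^2$ reconstruction formula \eqref{as} on $\R^k$, combined with an inverse partial Fourier transform in the remaining $n-k$ variables.

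First I would apply the linear change of coordinates from Subsection~2.1: set $h(s)=|C|f(Cs)$, which belongs to $\mathcal S^{\al}_{\be}(\R^n)$ by the remark following Definition~\ref{analsyntopp}, and $\eta=C^T\xi$. Since $u_i\cdot t=s_i$ for $i\le k$, $\xi\cdot t=\eta\cdot s$, and $d\xi=|C|^{-1}d\eta$, relation \eqref{1rfdstft} is equivalent (after the factor $|C|^{-1}$ cancels on both sides) to the pointwise identity
\[
h(s)=\frac{1}{(g,\varphi)}\int_{\R^n}\!\int_{\R^k}DS_{g,\bold e^k}h(\tilde y,\eta)\,\varphi(\tilde s-\tilde y)\,e^{2\pi i s\cdot\eta}\,d\tilde y\,d\eta,\qquad s=(\tilde s,\tilde{\tilde s})\in\R^n,
\]
and it suffices to prove this reduced form.

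The key step is to factor the $k$-DSTFT through the partial Fourier transform in the last $n-k$ coordinates. Writing $\mathcal F_2$ for this transform and $H(\tilde s,\tilde{\tilde\eta})=(\mathcal F_2 h)(\tilde s,\tilde{\tilde\eta})$, the rapid decay of $h$ justifies Fubini and gives
\[
DS_{g,\bold e^k}h(\tilde y,\eta)=V_g\bigl(H(\cdot,\tilde{\tilde\eta})\bigr)(\tilde y,\tilde\eta),\qquad \eta=(\tilde\eta,\tilde{\tilde\eta}),
\]
where $V_g$ is the ordinary STFT on $\R^k$. For each fixed $\tilde{\tilde\eta}$ the slice $\tilde s\mapsto H(\tilde s,\tilde{\tilde\eta})$ lies in $\mathcal S^{\al}_{\be}(\R^k)$, so the pointwise Gelfand--Shilov reconstruction formula recorded after the definition of $V^*_g$ applies and yields
\[
H(\tilde s,\tilde{\tilde\eta})=\frac{1}{(g,\varphi)}\int_{\R^k}\!\int_{\R^k}DS_{g,\bold e^k}h\bigl(\tilde y,(\tilde\eta,\tilde{\tilde\eta})\bigr)\,\varphi(\tilde s-\tilde y)\,e^{2\pi i\tilde s\cdot\tilde\eta}\,d\tilde y\,d\tilde\eta.
\]

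The final step is to apply $\mathcal F_2^{-1}$ in $\tilde{\tilde\eta}$ to both sides and interchange the orders of the resulting triple integral on $\R^{n-k}\times\R^k\times\R^k$. This swap is the only delicate point --- and the main obstacle in the plan --- since it requires an $L^1$-majorant for the integrand. Here Theorem~\ref{ridir} does the heavy lifting: it guarantees $DS_{g,\bold e^k}h\in\mathcal S^{\al}_{\be}(\R^{k+n})$ and hence supplies rapid decay jointly in $(\tilde y,\tilde\eta,\tilde{\tilde\eta})$; combined with the rapid decay of $\varphi\in\mathcal S^{\al}_{\be}(\R^k)$, the full integrand is absolutely integrable, so Fubini applies and the continuity of all integrands in $s$ upgrades the almost-everywhere identity to a pointwise one. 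Undoing the coordinate change then recovers \eqref{1rfdstft}, completing the plan.
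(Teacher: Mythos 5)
Your proof is correct, but it follows a genuinely different route from the paper's. After the same reduction to the $\bold e^k$ case via $h(s)=|C|f(Cs)$, the paper does not slice: it establishes the Moyal-type orthogonality relation
\begin{equation*}
(DS_{g,\bold u^k}f_1,DS_{\varphi,\bold u^k}f_2)_{L^2(\mathbb R^k\times\mathbb R^n)}=(h_1,h_2)_{L^2(\mathbb R^n)}\,(\overline g,\overline\varphi)_{L^2(\mathbb R^k)}
\end{equation*}
by a direct Parseval computation (its \eqref{ddd12}), and then obtains \eqref{1rfdstft} as in Gr\"ochenig's Theorem 3.2.1 and Corollary 3.2.3, i.e.\ by the standard weak-inversion argument from orthogonality. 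Your factorization $DS_{g,\bold e^k}h(\tilde y,\eta)=V_g\bigl((\mathcal F_2h)(\cdot,\tilde{\tilde\eta})\bigr)(\tilde y,\tilde\eta)$, identifying the $k$-DSTFT as the ordinary STFT on $\R^k$ applied to partial-Fourier slices, is sound: each slice $\tilde s\mapsto (\mathcal F_2h)(\tilde s,\tilde{\tilde\eta})$ does lie in $\mathcal S^{\al}_{\be}(\R^k)$ with bounds uniform in $\tilde{\tilde\eta}$, so the pointwise Gelfand--Shilov reconstruction \eqref{as} (from [T1], as recorded in the paper) applies slice-wise, and your final Fubini step is indeed covered by Theorem \ref{ridir}, since $|DS_{g,\bold e^k}h(\tilde y,\eta)|\,\|\varphi\|_{L^\infty(\R^k)}$ is an $L^1(\R^{k+n})$ majorant; your bookkeeping of the Jacobian factors ($h=|C|f(C\cdot)$ against $d\xi=|C|^{-1}d\eta$) in the reduction is also right. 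As for what each approach buys: the paper's argument is shorter and yields the orthogonality relation as a byproduct, which is useful in its own right (e.g.\ for $L^2$ extensions and norm identities for the transform), but it imports the inversion mechanism and the upgrade to pointwise equality from cited $L^2$-theory results; your argument is more self-contained on the pointwise claim, deriving it directly from the one-block case and Fourier inversion, at the cost of leaning on the full strength of the continuity Theorem \ref{ridir} to control the triple integral.
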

\begin{proof} Indeed, by using the Parseval identity  for given $f_1,f_2\in L^2(\mathbb R^n)$ and $g,\varphi\in \S_\be^\al(\R^k)$, and after the change of variables as in the representation (\ref{dd22}), that is $h_i(\cdot)=|C|f_i(C\cdot), i=1,2$, we have
\begin{eqnarray}\label{ddd12}&&
 (DS_{g,\bold{u}^k}f_1(\tilde y,\xi),DS_{\varphi,\bold {u}^k}f_2(\tilde y,\xi))_{L^2(\mathbb R^k \times \mathbb R^n)}
\nonumber\\
&=&(DS_{g,\bold{e}^k} h_1(\tilde y,\eta),DS_{\varphi,\bold{e}^k}h_2(\tilde y,\eta))_{L^2(\mathbb R^k \times \mathbb R^n)}\nonumber\\
  &=&( h_1,h_2)_{L^2(\mathbb R^n)}( \overline{g},\overline{\varphi})_{L^2(\mathbb R^k)}.
\end{eqnarray}

We obtain the reconstruction formula
 (\ref{1rfdstft}) as a consequence of (\ref{ddd12}) as in  \cite[Theorem 3.2.1. and Corollary 3.2.3]{Gr}.
\end{proof}
 Now we will consider the continuity properties of \eqref{2ds}.
We fix $g\in\mathcal S^\alpha_\beta(\R^k)$.
\begin{theorem}\label{synthcont}
 By
$DS^\ast_{g,\bold e^k}(H(\tilde y,\eta))(s)=h(s), s\in\R^n$ given by (\ref{2ds}),
  is defined a continuous linear mapping
$$\S^{\alpha}_{\beta}(\R^k\times\mathbb R^n)\rightarrow
{\S}^{\alpha}_{\beta}(\R^{n}),$$
$$H\mapsto h=DS^{\ast}_{g,\bold e^k}H. $$
\end{theorem}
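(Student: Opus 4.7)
The proof follows the same pattern as Theorem \ref{ridir}. The goal is to show that, for every $b>0$, there exist $c>0$ and $C>0$ such that
\[
\sigma_b^{\alpha,\beta}(DS^{\ast}_{g,\bold e^k}H)\leq C\,\sigma_c^{\alpha,\beta}(H)\qquad (H\in\mathcal{S}^{\alpha}_\beta(\R^{k+n})),
\]
with $C$ and $c$ depending only on $b$ and the fixed window $g\in\mathcal{S}^{\alpha}_\beta(\R^k)$. Writing $h=DS^{\ast}_{g,\bold e^k}H$, this reduces to bounding $|s^p\partial^q_s h(s)|$ uniformly in $s\in\R^n$ for each $p,q\in\N_0^n$. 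The strategy is to differentiate under the integral sign in \eqref{2ds}; convert each power of $s$ into either a power of $(\tilde s-\tilde y)$ (absorbed by a derivative of $g$) or into a derivative in $\eta$ (transferred to $H$ by integration by parts against the exponential); apply the Gelfand--Shilov estimates for $g$ and $H$; and insert an auxiliary weight to guarantee absolute convergence of the double integral.

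Splitting $p=(\tilde p,\tilde{\tilde p})$ and $q=(\tilde q,\tilde{\tilde q})$ in $\N_0^k\times\N_0^{n-k}$, and noting that $g(\tilde s-\tilde y)$ is independent of $\tilde{\tilde s}$, Leibniz yields
\[
\partial^q_s\bigl[g(\tilde s-\tilde y)e^{2\pi i s\cdot\eta}\bigr]=\sum_{\tilde j\le\tilde q}\binom{\tilde q}{\tilde j}(2\pi i)^{|\tilde q-\tilde j|+|\tilde{\tilde q}|}\,g^{(\tilde j)}(\tilde s-\tilde y)\,\tilde\eta^{\tilde q-\tilde j}\,\tilde{\tilde\eta}^{\tilde{\tilde q}}\,e^{2\pi i s\cdot\eta}.
\]
Writing $\tilde s^{\tilde p}=\sum_{\tilde l\le\tilde p}\binom{\tilde p}{\tilde l}(\tilde s-\tilde y)^{\tilde l}\tilde y^{\tilde p-\tilde l}$ transfers the factor $\tilde s^{\tilde p}$ partly onto $g$ (via $(\tilde s-\tilde y)^{\tilde l}$) and partly onto $H$ (via $\tilde y^{\tilde p-\tilde l}$). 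For the remaining factor $\tilde{\tilde s}^{\tilde{\tilde p}}$ I use
$\tilde{\tilde s}^{\tilde{\tilde p}}e^{2\pi i s\cdot\eta}=(2\pi i)^{-|\tilde{\tilde p}|}\partial^{\tilde{\tilde p}}_{\tilde{\tilde\eta}}e^{2\pi i s\cdot\eta}$
and integrate by parts $|\tilde{\tilde p}|$ times in $\tilde{\tilde\eta}$, transferring $\partial^{\tilde{\tilde p}}_{\tilde{\tilde\eta}}$ onto $\tilde{\tilde\eta}^{\tilde{\tilde q}}H(\tilde y,\eta)$; a further Leibniz expansion produces terms of the form $\tilde{\tilde\eta}^{\tilde{\tilde q}-\tilde{\tilde m}}\partial^{\tilde{\tilde p}-\tilde{\tilde m}}_{\tilde{\tilde\eta}}H(\tilde y,\eta)$.

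Collecting everything, $s^p\partial^q_s h(s)$ becomes a finite sum (over $\tilde l,\tilde j,\tilde{\tilde m}$) of integrals whose integrands factor as a product of $[(\tilde s-\tilde y)^{\tilde l}g^{(\tilde j)}(\tilde s-\tilde y)]$ and $[\tilde y^{\tilde p-\tilde l}\tilde\eta^{\tilde q-\tilde j}\tilde{\tilde\eta}^{\tilde{\tilde q}-\tilde{\tilde m}}\partial^{\tilde{\tilde p}-\tilde{\tilde m}}_{\tilde{\tilde\eta}}H(\tilde y,\eta)]$ times an oscillatory exponential. Each bracket is bounded by the corresponding Gelfand--Shilov seminorm \eqref{ngs}, using Remark \ref{re1} to freely shift polynomial and derivative weights. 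Absolute convergence of the $(\tilde y,\eta)$-integral is secured as in Theorem \ref{ridir} by splitting the region into $\{|(\tilde y,\eta)|\le 1\}$ and $\{|(\tilde y,\eta)|>1\}$ and inserting the factor $1/[(\tilde y,\eta)]^{(2,\dots,2)}$ in the latter, at the cost of only a finite number of extra powers and thus a controlled multiplicative constant.

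The main obstacle is the bookkeeping of the Gevrey factorials. Using the inequalities $(v-\tilde j)!^\alpha\tilde j!^\alpha\le v!^\alpha$ and $(\tilde j+\tilde\gamma)!^\alpha\le 2^{|\tilde j|+|\tilde\gamma|}\tilde j!^\alpha\tilde\gamma!^\alpha$ already invoked in the proof of Theorem \ref{ridir}, together with $\sum_{\tilde j\le\tilde q}\binom{\tilde q}{\tilde j}\le 2^{|\tilde q|}$, the products of factorials coming from the $g$-estimates and the $H$-estimates recombine, up to a universal factor $2^{|p|+|q|}$, into the required $p!^\beta q!^\alpha$. The geometric factor is reabsorbed by replacing $c$ with a suitably smaller constant $c'$, yielding $|s^p\partial^q_s h(s)|\le C\,\sigma^{\alpha,\beta}_{c'}(H)\,p!^\beta q!^\alpha/b^{|p|+|q|}$, which is precisely the desired bound. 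Continuity of the mapping $H\mapsto h$ between the respective inductive limits follows immediately.
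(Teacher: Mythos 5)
Your proof is correct and runs on the same basic mechanism as the paper's: differentiate under the integral sign in \eqref{2ds} (Leibniz, with $g$ contributing only through $g^{(\tilde j)}(\tilde s-\tilde y)$ and the exponential through powers of $\eta$), convert the monomial in $s$ into $\eta$-derivatives by integrating by parts against $e^{2\pi i s\cdot\eta}$, and close with Gelfand--Shilov seminorm estimates and an inserted weight for absolute convergence. The one genuine difference is how you treat the first $k$ components of the weight: you expand $\tilde s^{\tilde p}=\sum_{\tilde l\le\tilde p}\binom{\tilde p}{\tilde l}(\tilde s-\tilde y)^{\tilde l}\,\tilde y^{\tilde p-\tilde l}$, loading $(\tilde s-\tilde y)^{\tilde l}$ onto the window and $\tilde y^{\tilde p-\tilde l}$ onto $H$, and reserve integration by parts for the $\tilde{\tilde\eta}$-variables only, whereas the paper transfers the whole monomial at once, writing $s^v e^{2\pi i s\cdot\eta}=(2\pi i)^{-|v|}\partial_\eta^v e^{2\pi i s\cdot\eta}$ and moving $\partial_\eta^v$ onto $F(\tilde y,\eta)\eta^{p-\tilde j}$. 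Both are valid, but the paper's uniform treatment has a payoff your hybrid loses: in the paper's computation $g$ is only ever differentiated, never multiplied by polynomial weights, which is precisely what permits weakening the hypothesis to $g\in\mathcal S^{\alpha}_0(\R^k)$ in Remark \ref{vind} and the bilinear Corollary that follows; your binomial step instead needs the decay estimates $|(\tilde s-\tilde y)^{\tilde l}g^{(\tilde j)}(\tilde s-\tilde y)|\le C\,\tilde l!^{\beta}\tilde j!^{\alpha}c^{-|\tilde l|-|\tilde j|}$, i.e., the full strength of $g\in\mathcal S^{\alpha}_{\beta}(\R^k)$. Two small points of hygiene: your Leibniz expansion of $\partial^{\tilde{\tilde p}}_{\tilde{\tilde\eta}}\bigl(\tilde{\tilde\eta}^{\tilde{\tilde q}}H\bigr)$ produces factors $\binom{\tilde{\tilde p}}{\tilde{\tilde m}}\,\tilde{\tilde q}!/(\tilde{\tilde q}-\tilde{\tilde m})!$, and absorbing the residual $\tilde{\tilde m}!$ into $p!^{\beta}q!^{\alpha}$ uses $\tilde{\tilde m}!\le\tilde{\tilde m}!^{\alpha+\beta}\le\tilde{\tilde p}!^{\beta}\,\tilde{\tilde q}!^{\alpha}$, so the nontriviality condition $\alpha+\beta\ge1$ quietly enters and deserves mention; and for continuity between the inductive limits the quantifiers should run ``for every $c>0$ there exist $b>0$ and $C>0$ with $\sigma^{\alpha,\beta}_b(h)\le C\,\sigma^{\alpha,\beta}_c(H)$'' (source norm fixed first), not the reverse order stated in your opening sentence --- your concluding estimate in fact delivers exactly the correct direction, so only the announcement needs fixing.
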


\begin{proof}
We will estimate $
\dfrac{c^{|p+v|}}{p!^\al v!^\be}s^v\partial^p_sh(s)$.
There holds
\begin{align*}
|s^v\partial^p_sh(s)|
&=|s^v\sum_{\tilde j\leq \tilde p}{{ p}\choose{\tilde j}}(2\pi i)^{p-\tilde j}\int_{\R^n}\int_{\R^k}F(\tilde y,\eta)
\eta^{p-\tilde j}{g^{(\tilde j)}(\tilde s-\tilde y)} e^{2\pi i s\cdot\eta}d\tilde{y}d\eta|\\
&=|\sum_{\tilde j\leq \tilde p}{{ p}\choose{\tilde j}}(2\pi i)^{p-\tilde j-v}\int_{\R^n}\int_{\R^k}
\partial_\eta^v(F(\tilde y,\eta)
\eta^{p-\tilde j}){g^{(\tilde j)}(\tilde s-\tilde y)}e^{2\pi {i}s\cdot\eta}d\tilde{y}d\eta|.
\end{align*}
Now it is easy to finish the proof.
\end{proof}
\begin{remark}\label{vind}
This proof shows that one can assume less restrictive conditions on $g$ since we just differentiate $g$. For example, we can only assume that $g\in \mathcal S^\alpha_0(\R^k)$:
\end{remark}
\begin{corollary}
$(DS^\ast_{g,\bold e^k}(H(\tilde y,\eta))(s)=h(s), s\in\R^n$
 defines a continuous bilinear mapping
$$\S^{\alpha}_{\beta}(\R^k\times\mathbb R^n)\times\S^\alpha_0(\R^k)\rightarrow
{\S}^{\alpha}_{\beta}(\R^{n}),$$
$$(H,g)\mapsto h=DS^{\ast}_{g,\bold e^k}H. $$
\end{corollary}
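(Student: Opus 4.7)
The plan is to revisit the proof of Theorem \ref{synthcont}, carefully tracking which norms of $g$ appear and arranging the estimate so that it is bilinear in $(H,g)$. Starting from the identity already written there,
\begin{equation*}
s^v\partial_s^p h(s)=\sum_{\tilde j\leq\tilde p}\binom{p}{\tilde j}(2\pi i)^{p-\tilde j-v}\iint \partial^v_\eta\bigl(H(\tilde y,\eta)\eta^{p-\tilde j}\bigr)\,g^{(\tilde j)}(\tilde s-\tilde y)\,e^{2\pi is\cdot\eta}\,d\tilde y\,d\eta,
\end{equation*}
I would first expand $\partial^v_\eta$ by Leibniz to distribute the $\eta$-derivatives between $H$ and $\eta^{p-\tilde j}$, then insert and divide out weights of the form $(1+|\eta|^2)^{n+1}(1+|\tilde y|^2)^{k+1}$ to ensure absolute integrability, absorbing the extra polynomial growth into the Gelfand--Shilov seminorms of $H$ (which provide arbitrarily fast polynomial decay). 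This is the same device used in the proof of Theorem \ref{ridir}.

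The crucial observation, already highlighted in Remark \ref{vind}, is that $g$ enters the above identity only through $g^{(\tilde j)}(\tilde s-\tilde y)$; it is never multiplied by a power of $\tilde s$ or $\tilde y$. Hence the only estimates on $g$ that are needed are uniform pointwise bounds on $|g^{(\tilde j)}|$ and $L^1$-integrability of $g^{(\tilde j)}$ in $\tilde y$, both with $\tilde j$-dependence of Gevrey-$\alpha$ type. For $g\in\S^\alpha_0(\R^k)$, finiteness of a defining seminorm $\sigma^{\alpha,0}_b(g)$ forces $g$ to have compact support contained in the box $\{|x_i|\leq 1/b\}$ together with the bound
\begin{equation*}
|g^{(\tilde j)}(x)|\leq \sigma^{\alpha,0}_b(g)\,\tilde j!^\alpha\,b^{-|\tilde j|}\qquad (x\in\R^k),
\end{equation*}
so both the sup-norm and the $L^1$-norm of $g^{(\tilde j)}$ are controlled with exactly the required Gevrey factorial dependence, uniformly in $\tilde j$.

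Combining these two ingredients yields an estimate of the form
\begin{equation*}
\frac{c^{|p|+|v|}}{p!^\beta v!^\alpha}\bigl|s^v\partial^p_s h(s)\bigr|\leq C\,\sigma^{\alpha,\beta}_{c'}(H)\,\sigma^{\alpha,0}_{c''}(g)
\end{equation*}
for appropriate $c,c',c''>0$, which gives joint boundedness on the corresponding Banach building blocks and, by the standard inductive-limit argument for $DFS$ spaces, the desired bilinear continuity $\S^{\alpha}_{\beta}(\R^k\times\R^n)\times\S^\alpha_0(\R^k)\to\S^{\alpha}_{\beta}(\R^n)$. The main obstacle I expect is the combinatorial bookkeeping: weaving together $\binom{p}{\tilde j}$, the factorials produced by the Leibniz expansion of $\partial^v_\eta\eta^{p-\tilde j}$, and the $\tilde j!^\alpha$ coming from the pointwise bound on $g^{(\tilde j)}$, so that they collapse to $p!^\beta v!^\alpha$ on the left-hand side while the right-hand side cleanly splits as one norm of $H$ times one norm of $g$. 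Once this is done, the argument is a straightforward rerun of Theorem \ref{synthcont}.
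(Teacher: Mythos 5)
Your proposal is correct and follows essentially the same route as the paper, which proves the corollary by exactly the argument you describe: rerunning the computation in the proof of Theorem \ref{synthcont} and invoking the observation of Remark \ref{vind} that $g$ enters only through $g^{(\tilde j)}(\tilde s-\tilde y)$, so that the compact support and Gevrey bounds encoded in $\sigma^{\alpha,0}_b(g)$ suffice. One small slip: in your final display the factorial weights should read $p!^{\alpha}v!^{\beta}$ (derivative index weighted by $\alpha$, power index by $\beta$, as in \eqref{ngs} and in the opening line of the paper's proof of Theorem \ref{synthcont}), not $p!^{\beta}v!^{\alpha}$ --- immaterial when $\alpha=\beta$ but needed for the general statement.
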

\subsection{$k$-DSTFT and $k$-DSO  on $\S^{\prime\al}_\be$}\label{temperedultra}

Let $\bold u^k=(u_1,\ldots,u_k),$ where $u_i, i=1,\ldots,k$ are independent vectors of $\mathbb S^{n-1}$, and $g\in\S^{\al}_\be(\R^k)$. The continuity results allow us to define $k$-DSTFT of $f\in \S^{\prime\al}_\be(\R^n)$  as an element $DS_{g,\bold u^k}f\in \S^{\prime\al}_{\be}(\R^{k}\times\R^n)$  whose action on test functions is given as a transposed mapping
$$\langle DS_{g,\bold u^k} f,\Phi\rangle=\langle f, DS^\ast_{\overline g, \bold u^k}\Phi\rangle, \qquad \Phi\in \S^{\al}_{\be}(\R^{k}\times\R^n).$$
We use notation $\R^{k}\times\R^n=\R^{k+n}$ just to justify the domain of the above mapping.

Since $g\in\S^{\al}_\be(\R^k)$, one can define
the $k$-DSTFT of an $f\in\mathcal S^{\prime\alpha}_\be(\R^d)$ as
$$DS_{g,\bold u^k}f(\tilde y,\xi)=\langle f(t),g((t\cdot u_1,...,t\cdot u_k)-\tilde y)e^{-2\pi t\cdot\xi}\rangle, \, \tilde y\in \R^k, \, \xi\in\R^n.
$$
This is a direct method of the definition of an integral transform.
We have
\begin{proposition}\label{nova1}
The two definitions of the $k$-DSTFT of an $f\in\mathcal S^{\prime\alpha}_\be(\R^d)$ coincide.
\end{proposition}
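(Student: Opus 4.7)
The plan is to check the identity of the two definitions by testing against an arbitrary $\Phi\in\mathcal{S}^{\al}_{\be}(\R^{k+n})$ and reducing to the $L^2$ setting by means of the structural representation \eqref{rep11}.

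First I would verify that the direct definition $T(\tilde y,\xi) := \langle f(t), g((u_1\cdot t,\ldots,u_k\cdot t)-\tilde y)e^{-2\pi i t\cdot\xi}\rangle$ yields a smooth function on $\R^{k+n}$ whose growth is compatible with pairing against $\Phi$. This follows from the fact that $(\tilde y,\xi)\mapsto g((u_1\cdot\cdot,\ldots,u_k\cdot\cdot)-\tilde y)e^{-2\pi i(\cdot)\cdot\xi}$ is a $\mathcal{C}^\infty$ map into $\mathcal{S}^{\al}_{\be}(\R^n)$ combined with continuity of $f$ on that space; in particular $\int\!\!\int T(\tilde y,\xi)\Phi(\tilde y,\xi)\,d\tilde y\,d\xi$ converges absolutely.

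Next, by \eqref{rep11} write $f = P_1(D)(P_2 F)$ with $F\in L^2(\R^n)$, $P_1$ a Roumieu type-$\al$ ultradifferential operator and $P_2$ a Roumieu type-$\be$ ultrapolynomial. The transposed definition then gives
\begin{equation*}
\langle DS_{g,\bold u^k}f,\Phi\rangle \,=\, \langle f, DS^\ast_{\overline g,\bold u^k}\Phi\rangle \,=\, \int_{\R^n} F(t)P_2(t)\bigl[P_1(-D_t) DS^\ast_{\overline g,\bold u^k}\Phi\bigr](t)\,dt,
\end{equation*}
where $P_1(-D)$ is the formal transpose of $P_1(D)$. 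The key step is to commute $P_1(-D_t)$ with the double integral defining $DS^\ast$, producing the absolutely convergent expression $\int\!\!\int \Phi(\tilde y,\xi)P_1(-D_t)[\overline{g}((u\cdot t)-\tilde y)e^{2\pi i t\cdot\xi}]\,d\tilde y\,d\xi$, and then to apply Fubini to swap the $t$-integral with the $(\tilde y,\xi)$-integral. Recognizing the inner $t$-integral as $\langle P_1(D)(P_2F)(t),\overline{g}((u\cdot t)-\tilde y)e^{2\pi i t\cdot\xi}\rangle = T(\tilde y,\xi)$ (up to the conjugation built into $DS^\ast_{\overline g,\bold u^k}$) then yields the equality $\langle DS_{g,\bold u^k}f,\Phi\rangle = \int\!\!\int T(\tilde y,\xi)\Phi(\tilde y,\xi)\,d\tilde y\,d\xi$, which is what is to be proved.

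The main obstacle is the interchange of $P_1(-D_t)$ with the double integral in the previous step, i.e.\ a termwise convergence and integrability argument for the series $\sum a_p(-1)^{|p|}\partial^p_t\{\overline{g}((u\cdot t)-\tilde y)e^{2\pi i t\cdot\xi}\}$. Here the Leibniz expansion produces factors $\xi^{p-q}\overline{g}^{(q)}((u\cdot t)-\tilde y)$; the exponential decay of $g^{(q)}$ guaranteed by $g\in\mathcal{S}^{\al}_{\be}(\R^k)$ controls the $\tilde y$-part, the exponential decay of $\Phi$ in $\mathcal{S}^{\al}_{\be}(\R^{k+n})$ absorbs the polynomial factors $\xi^{p-q}$, and the bound \eqref{RP} (which gives arbitrarily small $a>0$) handles the coefficients $a_p$. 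Matching the Roumieu and Gelfand--Shilov scales lets one produce a uniform integrable majorant, after which dominated convergence and the classical Fubini theorem complete the exchange and hence the proof.
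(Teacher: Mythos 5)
Your proposal is correct and follows essentially the same route as the paper's (very terse) proof: invoke the structural representation \eqref{rep11}, use the continuity of $P_1(-D)$ and of multiplication by $P_2$ on $\mathcal S^{\al}_{\be}$ to move the ultradifferential operator onto the window--exponential kernel, and conclude by Fubini. In fact you supply the details the paper only gestures at --- the integrable majorant for the series $\sum_p a_p(-1)^{|p|}\partial_t^p\bigl[\overline{g}((u\cdot t)-\tilde y)e^{2\pi i t\cdot\xi}\bigr]$ via \eqref{RP} and the Gelfand--Shilov decay of $g$ and $\Phi$ --- so nothing further is needed.
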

\begin{proof}
One has to use the representation formula (\ref{rep11}), then the continuity of
the $P_1(-D)$, of $P_2(x)$ over $\mathcal S^\al_\be(\R^n)$ and then the Fubinni theorem.
\end{proof}

Next, the $k$-DSO $DS^*_{g,\bold u^k}:\S^{\prime\al}_{\be}(\R^{k}\times\R^n)\rightarrow \S^{\prime\al}_{\be}(\R^n)$ can be defined as
\[ \langle DS^*_{g,\bold u^k} F, \varphi\rangle=\langle F, DS_{\overline g, \bold u^k}\varphi\rangle, \quad F\in\S^{\prime\al}_{\be}(\R^{k}\times\R^n), \varphi \in \S_\be^\al(\R^n).\]
We repeat the arguments given above.
Let  $F\in\mathcal S^{\al}_\be(\mathbb R^k\times\R^n)$ be of the form
$$F(\tilde y,\xi)=P_1(D_{\tilde y,\xi})(P_2(\tilde y,\xi)F_0(\tilde y,\xi)) \mbox{ (c.f.(\ref{rep11})), }
$$
where $P_1(D_{\tilde y,\xi})$ and $P_2(\tilde y,\xi)$ are ultradiferential operator over $\R^{k+n}$ of Roumieu class  $\alpha$ and ultradifferential polynomial of Roumieu class $\beta$.
Again, we define $ DS^*_{g,\bold u^k}F$ by a direct method
$$  DS^*_{g,\bold u^k}F(t)=\langle F,g((u_1\cdot t,...,u_k\cdot t)-\tilde y)e^{2\pi i\xi\cdot t}\rangle, \quad t\in\R^n.
$$
We have
\begin{proposition}\label{nova2}
The two definitions of the $k$-DSO
$DS^*_{g,\bold u^k}f$
 of an $f\in\mathcal S^{\prime\alpha}_\be(\R^k\times\R^d)$ coincide.
\end{proposition}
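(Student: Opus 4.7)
The plan is to mirror the proof of Proposition \ref{nova1}, but starting from the representation formula applied to $F\in\mathcal S^{\prime\alpha}_\be(\R^k\times\R^n)$ rather than to elements of $\mathcal S^{\prime\alpha}_\be(\R^n)$. Write $F(\tilde y,\xi)=P_1(D_{\tilde y,\xi})(P_2(\tilde y,\xi)F_0(\tilde y,\xi))$ with $F_0\in L^2(\R^{k+n})$, $P_1$ an ultradifferential operator of Roumieu class $\alpha$, and $P_2$ an ultrapolynomial of Roumieu class $\beta$ on $\R^{k+n}$. The direct-method definition is meaningful because, by the continuity of $P_1(D)$ and multiplication by $P_2$ on $\S^{\alpha}_{\beta}(\R^{k+n})$, together with the fact that for fixed $t\in\R^n$ the map $(\tilde y,\xi)\mapsto g((u_1\cdot t,\dots,u_k\cdot t)-\tilde y)e^{2\pi i\xi\cdot t}$ lies in $\S^{\alpha}_{\beta}(\R^{k+n})$, the bracket $\langle F,g((u_1\cdot t,\dots,u_k\cdot t)-\tilde y)e^{2\pi i\xi\cdot t}\rangle$ is well-defined pointwise in $t$.

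The first step is to test the direct-method definition against an arbitrary $\varphi\in\S^{\alpha}_{\beta}(\R^n)$ and show the resulting pairing equals $\langle F,DS_{\overline g,\bold u^k}\varphi\rangle$. Using the representation of $F$, I transfer $P_1(D_{\tilde y,\xi})$ and $P_2(\tilde y,\xi)$ onto the window kernel by the defining duality, obtaining for each $t$
\begin{equation*}
DS^*_{g,\bold u^k}F(t)=\int_{\R^{k+n}}F_0(\tilde y,\xi)\,P_2(\tilde y,\xi)\,P_1(-D_{\tilde y,\xi})\!\left[g((u_1\cdot t,\dots,u_k\cdot t)-\tilde y)e^{2\pi i\xi\cdot t}\right]d\tilde y\,d\xi.
\end{equation*}
The next step is to pair both sides with $\varphi(t)$ and integrate in $t$. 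A direct binomial expansion shows $P_1(-D_{\tilde y,\xi})$ acting on the kernel produces a finite-like series of terms of the form $c_{p,q}(2\pi i)^{|q|}\,t^{q}\,g^{(p)}((u_1\cdot t,\dots,u_k\cdot t)-\tilde y)e^{2\pi i\xi\cdot t}$ with coefficients decaying like $a^{|p|+|q|}/(p!^\alpha q!^\alpha)$; multiplying by $P_2$ and by $F_0\in L^2$ yields an integrand that is absolutely integrable on $\R^{n}\times\R^{k+n}$ (because $\varphi\in\S^{\alpha}_{\beta}$ absorbs any ultrapolynomial growth in $t$, and the rapidly decaying behavior of $g$ and its derivatives controls the $(\tilde y,\xi)$ integration jointly with the $L^2$ factor after a Cauchy--Schwarz estimate). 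This justifies Fubini.

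After interchanging the order of integration, the $t$-integral against $\varphi$ collapses the kernel into $\overline{DS_{\overline g,\bold u^k}\varphi(\tilde y,\xi)}$-type expressions, and reversing the transfer of $P_1(-D_{\tilde y,\xi})$ and $P_2$ back onto $F_0$ returns $\langle F,DS_{\overline g,\bold u^k}\varphi\rangle$; here I use the continuity of $DS_{\overline g,\bold u^k}$ on $\S^{\alpha}_{\beta}(\R^n)$ from Theorem \ref{ridir} so the right-hand pairing is meaningful and the transfer identity is legitimate. This yields $\langle DS^*_{g,\bold u^k}F,\varphi\rangle=\langle F,DS_{\overline g,\bold u^k}\varphi\rangle$ for all $\varphi$, which is exactly the transposed definition.

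The main obstacle is the Fubini justification: one must bound the triple integrand $|F_0(\tilde y,\xi)\,P_2(\tilde y,\xi)\,P_1(-D)[\cdots]\,\varphi(t)|$ uniformly enough to apply Tonelli. The idea is to exploit \eqref{ulpobound} together with the Gelfand--Shilov decay of $g,\varphi$ so that $P_2(\tilde y,\xi)$ and the symbolic $P_1$-derivative growth in $(\tilde y,\xi)$ are dominated by the joint rapid decay of $g^{(p)}((u_1\cdot t,\dots,u_k\cdot t)-\tilde y)$ and $\varphi(t)$ in the combined variables. A Cauchy--Schwarz application in $(\tilde y,\xi)$ against $F_0$ then finishes the estimate. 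Once absolute convergence is in hand, the algebraic manipulations (integration by parts in $(\tilde y,\xi)$ transferring $P_1$, multiplication transfer of $P_2$, and the $t$-integration collapsing the kernel) are straightforward and parallel to the proof of Proposition \ref{nova1}.
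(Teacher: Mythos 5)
Your overall strategy---representing $F$ via \eqref{rep11}, transferring $P_1(-D_{\tilde y,\xi})$ and $P_2(\tilde y,\xi)$ onto the kernel, and identifying the result with the transposed definition via Fubini---is exactly the route the paper intends (its proof of Proposition \ref{nova1} is the same three-step sketch, and for Proposition \ref{nova2} it merely says the arguments are repeated). However, two of your supporting claims are false, and they sit at the only genuinely delicate point of the statement. First, for fixed $t\in\R^n$ the kernel $(\tilde y,\xi)\mapsto g((u_1\cdot t,\dots,u_k\cdot t)-\tilde y)\,e^{2\pi i\xi\cdot t}$ does \emph{not} belong to $\S^{\alpha}_{\beta}(\R^{k+n})$: its modulus equals $|g((u_1\cdot t,\dots,u_k\cdot t)-\tilde y)|$, which is independent of $\xi$, so it has no decay at all in the $\xi$-variables. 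This is precisely why the direct method cannot be reduced to ordinary duality and why the structural representation is needed in the first place; asserting membership in the test space begs the main question.

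Second, and as a consequence, your Tonelli justification fails as stated. When $P_1(-D_{\tilde y,\xi})$ acts on the kernel, the $\xi$-derivatives only produce factors of the form $(2\pi i t)^{q}$, so the resulting expression still has $\xi$-independent modulus; multiplying by $P_2(\tilde y,\xi)$, which grows sub-exponentially in $|\xi|$, and by $F_0$, which is merely in $L^2(\R^{k+n})$, leaves a triple integrand with no absolute integrability in $\xi$: the decay of $g$ controls only $\tilde y$, the decay of $\varphi$ only $t$, and a Cauchy--Schwarz estimate against $F_0$ would require the complementary factor to lie in $L^2_{(\tilde y,\xi)}$, which it does not. The $\xi$-decay appears only \emph{after} the $t$-integration is performed, since $\int_{\R^n}\varphi(t)(2\pi i t)^{q}\,g^{(\tilde p)}((u_1\cdot t,\dots)-\tilde y)e^{2\pi i\xi\cdot t}\,dt$ is (a moment of) a directional short-time Fourier transform of a test function, hence in $\S^{\alpha}_{\beta}(\R^{k+n})$ by Theorem \ref{ridir}; so the interchange of integrals cannot be justified by naive Fubini on the modulus. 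To close the gap you should either truncate, replacing $F_0$ by $F_0\chi_{B_m}$ so that $P_2F_0\chi_{B_m}\in L^1\cap L^2$ (for which Fubini is legitimate), prove the identity for each $m$, and pass to the limit using the continuity of both definitions (Proposition \ref{pr1} and Theorem \ref{synthcont}), or regularize with an elliptic ultradifferential operator and transfer derivatives, exactly as the paper does with the oscillatory-integral technique in the proof of Theorem \ref{nwr}.
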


  We immediately obtain:
\begin{proposition} \label{pr1} Let  $g\in\S^{\al}_0(\R^k)$. The $k$-directional short-time Fourier transform, $DS_{g,\bold u^k}:\S^{\prime\al}_\be(\R^n)\to \S^{\prime\al}_\be(\R^k\times\R^n)$ and the  synthesis operator $DS^{*}_{g,\bold u^k}:$  $ \S^{\prime\al}_\be(\R^k\times\R^n)\to\S^{\prime\al}_\be(\R^n)$ are  continuous linear maps. \end{proposition}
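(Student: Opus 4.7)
The plan is to derive Proposition \ref{pr1} as an immediate consequence of the test-function continuity theorems together with the standard duality principle that the transpose of a continuous linear operator between locally convex spaces is continuous for the strong-dual topologies.

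First, I would reduce to the canonical direction $\bold e^k$. The linear change of variables $t=Cs$, $\eta=C^T\xi$ introduced in Section 2.1 is a topological automorphism of $\mathcal S^\alpha_\beta(\R^n)$ (and of $\mathcal S^\alpha_\beta(\R^{k+n})$), and by transposition it induces a topological automorphism of the corresponding strong duals; it intertwines the $\bold u^k$-transforms with the $\bold e^k$-transforms on both the analysis and synthesis sides. Hence it suffices to verify the two continuities for $\bold u^k=\bold e^k$.

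Next, I would collect the two test-space continuities already at our disposal. With $g\in\mathcal S^\alpha_0(\R^k)$ fixed (so that $\overline g$ also lies in $\mathcal S^\alpha_0(\R^k)$), Theorem \ref{ridir} yields the continuity and linearity of
\[
DS_{\overline g,\bold e^k}\colon\mathcal S^\alpha_\beta(\R^n)\to\mathcal S^\alpha_\beta(\R^{k+n}),
\]
while the Corollary following Theorem \ref{synthcont} yields the continuity and linearity of
\[
DS^\ast_{\overline g,\bold e^k}\colon\mathcal S^\alpha_\beta(\R^{k+n})\to\mathcal S^\alpha_\beta(\R^n).
\]
The defining pairings
\[
\langle DS_{g,\bold e^k}f,\Phi\rangle=\langle f,DS^\ast_{\overline g,\bold e^k}\Phi\rangle,\qquad
\langle DS^\ast_{g,\bold e^k}F,\varphi\rangle=\langle F,DS_{\overline g,\bold e^k}\varphi\rangle
\]
then identify the distributional operators $DS_{g,\bold e^k}$ and $DS^\ast_{g,\bold e^k}$ precisely as the transposes of the two continuous test-function operators just named.

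Finally, since $\mathcal S^\alpha_\beta(\R^d)$ is a nuclear (DFS) space (as recalled in Subsection 1.2.1 of the paper), the transpose of a continuous linear map between such spaces is automatically continuous for the corresponding strong-dual topologies. Applying this to the two transposes identified above gives exactly the continuity assertions of Proposition \ref{pr1}, which is why the paper signals it with \emph{``We immediately obtain''}.

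I do not expect a substantial obstacle. The one subtlety worth flagging is the role of the window hypothesis: the condition $g\in\mathcal S^\alpha_0(\R^k)$ is genuinely needed only on the synthesis leg (cf.\ Remark \ref{vind}, which explains that only derivatives of $g$ enter the synthesis estimate), whereas on the analysis leg the weaker hypothesis $g\in\mathcal S^\alpha_\beta(\R^k)$ from Theorem \ref{ridir} already suffices; the inclusion $\mathcal S^\alpha_0(\R^k)\subset\mathcal S^\alpha_\beta(\R^k)$ makes both results applicable under the single hypothesis stated.
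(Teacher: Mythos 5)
Your proposal is correct and takes essentially the same approach the paper intends: the paper gives no explicit proof beyond ``We immediately obtain'' precisely because Proposition \ref{pr1} follows by identifying the dual-level operators, through their defining pairings, as transposes of the test-level continuous maps of Theorem \ref{ridir} and the Corollary to Theorem \ref{synthcont} (after the coordinate reduction to $\bold e^k$), exactly as you argue. One minor remark: the appeal to nuclearity/DFS is superfluous, since the transpose of a continuous linear map between locally convex spaces is automatically continuous for the strong dual topologies.
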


The following theorem connects the $k$-DSTFTs  with respect to different windows.
\begin{theorem}\label{d444} Let $\bold u^k=(u_1,\ldots,u_k),$ where $u_i, i=1,\ldots,k$ are independent vectors of $\mathbb S^{n-1}$.
Let  $\varphi, g, \gamma$ belong to $\mathcal S^\al_0(\mathbb R^k)$ where $\gamma$ is the synthesis
window for $g$. Let $f\in\mathcal S^{\prime\al}_\be(\mathbb R^n)$, then
$$DS_{\varphi,\bold u^k}f(\tilde x,\eta)=(DS_{g,\bold u^k}f(\tilde s,\zeta))*(DS_{\varphi,\bold u^k}\gamma(\tilde s,\zeta))(\tilde x,\eta), \quad \tilde x,\tilde s\in\R^k, \ \eta, \zeta\in\R^n.
$$
\end{theorem}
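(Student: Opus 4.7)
The plan is to apply $DS_{\varphi,\bold u^k}$ to the reconstruction formula of Proposition \ref{propg}, exchange the order of integration, and identify the resulting inner kernel as $DS_{\varphi,\bold u^k}\gamma$ evaluated at translated arguments, thereby producing the convolution structure.

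First, for $f\in\mathcal S^\alpha_\beta(\R^n)$, Proposition \ref{propg} yields the pointwise inversion
\begin{equation*}
f(t)=\frac{1}{(g,\gamma)}\int_{\R^n}\int_{\R^k} DS_{g,\bold u^k}f(\tilde s,\zeta)\,\gamma_{\bold u^k,\tilde s,\zeta}(t)\,d\tilde s\,d\zeta.
\end{equation*}
Inserting this into the defining integral of $DS_{\varphi,\bold u^k}f(\tilde x,\eta)$ and applying Fubini gives
\begin{equation*}
(g,\gamma)\,DS_{\varphi,\bold u^k}f(\tilde x,\eta)=\int_{\R^n}\int_{\R^k} DS_{g,\bold u^k}f(\tilde s,\zeta)\,K(\tilde x,\eta;\tilde s,\zeta)\,d\tilde s\,d\zeta,
\end{equation*}
where $K(\tilde x,\eta;\tilde s,\zeta)=\int_{\R^n}\gamma((u_1\cdot t,\ldots,u_k\cdot t)-\tilde s)\,\overline{\varphi((u_1\cdot t,\ldots,u_k\cdot t)-\tilde x)}\,e^{-2\pi i t\cdot(\eta-\zeta)}\,dt$. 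After the coordinate reduction $t=Cs$ to the $\bold e^k$-direction and a translation $s\mapsto s+(\tilde s,0,\ldots,0)$, a phase factors out of $K$ and the surviving integral is exactly $DS_{\varphi,\bold u^k}\gamma(\tilde x-\tilde s,\eta-\zeta)$; collecting the factors exhibits the convolution of $DS_{g,\bold u^k}f$ with $DS_{\varphi,\bold u^k}\gamma$ evaluated at $(\tilde x,\eta)$.

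To pass from $f\in\mathcal S^\alpha_\beta(\R^n)$ to $f\in\mathcal S^{\prime\alpha}_\beta(\R^n)$, I invoke the representation $f=P_1(D)(P_2 F_0)$ from \eqref{rep11} with $F_0\in L^2(\R^n)$, and transfer $P_1(D)$ and the ultrapolynomial multiplicator $P_2$ onto the windows through the transposition argument already used in Propositions \ref{nova1} and \ref{nova2}. Combined with the continuity statements from Subsection \ref{ultrap} and Proposition \ref{pr1}, this reduces the entire computation to an $L^2$ integrand for which classical Fubini applies, and the identity is then extended to $\mathcal S^{\prime\alpha}_\beta(\R^n)$ by density of the Schwartz-type picture carried by the $L^2$ factor.

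The principal obstacle is the rigorous justification of the interchange of $DS_{\varphi,\bold u^k}$ with the $d\tilde s\,d\zeta$ integration at the ultradistribution level and the associated use of Fubini. This is controlled by the Gevrey estimates \eqref{RP} and \eqref{ngs}: once $P_1(D)$ has been transferred onto the product of smooth windows $\gamma\,\overline{\varphi}$, the resulting series of derivatives converges absolutely thanks to the $\mathcal S^\alpha_0(\R^k)$-regularity of the windows, and a similar domination for $DS_{g,\bold u^k}f$ (again through the representation \eqref{rep11} and the continuity Theorem \ref{ridir}) provides the integrability needed to interchange the outer and inner integrals.
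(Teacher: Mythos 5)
Your route is in substance the paper's own: substituting the reconstruction formula of Proposition \ref{propg} into the defining integral of $DS_{\varphi,\bold u^k}f$ is exactly the paper's computation of $DS_{\varphi,\bold e^k}(DS^{*}_{\gamma,\bold e^k}F)$ with $F=DS_{g,\bold e^k}f$ specialized from the outset; the only real divergence is the extension to $\mathcal S^{\prime\al}_\be(\R^n)$, where the paper tersely invokes continuity (``we can assume $f=F\in L^2$'') while you route through the representation \eqref{rep11} and transfer $P_1(D)$ and $P_2$ onto the windows, as in Propositions \ref{nova1} and \ref{nova2}. Both mechanisms are viable, and your Fubini justification at the test-function level is indeed covered by the decay supplied by Theorem \ref{ridir}.

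The genuine gap is at the step ``a phase factors out of $K$ \dots collecting the factors exhibits the convolution.'' After reducing to the $\bold e^k$ frame and translating $\tilde t\mapsto\tilde t+\tilde s$, the phase produced is $e^{-2\pi i\tilde s\cdot(\tilde\eta-\tilde\zeta)}$: it depends on the \emph{outer} integration variables $(\tilde s,\tilde\zeta)$, so it leaves the inner $dt$-integral but stays inside the $d\tilde s\,d\zeta$ integral. What your computation actually yields is the twisted convolution
\begin{equation*}
(g,\gamma)\,DS_{\varphi,\bold e^k}f(\tilde x,\eta)=\int_{\R^n}\int_{\R^k}DS_{g,\bold e^k}f(\tilde s,\zeta)\,e^{-2\pi i\tilde s\cdot(\tilde\eta-\tilde\zeta)}\,DS_{\varphi,\bold e^k}\gamma(\tilde x-\tilde s,\eta-\zeta)\,d\tilde s\,d\zeta,
\end{equation*}
not the plain convolution asserted in the theorem. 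You should know that the paper's proof commits the same omission: in its second displayed equality the inner integral $\int\gamma(\tilde t-\tilde y)\overline{\varphi(\tilde t-\tilde x)}e^{-2\pi it\cdot(\eta-\xi)}dt$ is identified with $DS_{\varphi,\bold e^k}\gamma(\tilde x-\tilde y,\eta-\xi)$ with the factor $e^{-2\pi i\tilde y\cdot(\tilde\eta-\tilde\xi)}$ silently dropped --- the classical twisted-convolution phenomenon for the STFT (compare Gr\"ochenig's inequality $|V_\varphi f|\le|\langle\gamma,g\rangle|^{-1}\,|V_g f|\ast|V_\varphi\gamma|$, which is stated as an inequality of moduli for precisely this reason). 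So your write-up faithfully reproduces the published argument, defect included; the statement that survives scrutiny, and the only one used downstream (Theorem \ref{nwr} and Corollary \ref{suz} estimate only moduli), is $|DS_{\varphi,\bold e^k}f|\le|(g,\gamma)|^{-1}\,|DS_{g,\bold e^k}f|\ast|DS_{\varphi,\bold e^k}\gamma|$. A last small point: you carry the constant $1/(g,\gamma)$ throughout, whereas the theorem is stated without it, so the normalization $(g,\gamma)=1$ must be imposed or the constant retained --- another detail the paper leaves implicit.
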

\proof We follow the proof in \cite{APS}.  By  \eqref{dd22}, it is enough to prove the assertion for $\bold e^k$. Let $F\in\S^{\prime\al}_{\be}(\mathbb R^{k}\times\R^n)$. By the continuity arguments, we can assume that $f=F\in L^2(\R^k\times\R^n).$
Then
\begin{eqnarray*} DS_{\varphi,\bold e^k}(DS_{\gamma,\bold e^k}^{*}F)(\tilde x,\eta) &=& \int_{\mathbb R^n}\big(\int_{\mathbb R
^{n}}\int_{\mathbb R^k}F(\tilde y, \xi)
\gamma(\tilde t-\tilde y) e^{2\pi i\xi \cdot t}d\tilde y d\xi\big)
\overline {\varphi(\tilde t-\tilde x)} e^{-2\pi it\cdot\eta}dt
\\&=& \int_{\mathbb R^n}\int_{\mathbb R
^{k}}(\int_{\mathbb R^n}\gamma(\tilde t)
\overline{\varphi(\tilde t-(\tilde x-\tilde y))}
 e^{-2\pi it\cdot(\eta-\xi)}dt)F(\tilde y,\xi)d\tilde y d\xi\\&=&\int_{\mathbb R^n}\int_{\mathbb R^{k}}F(\tilde y,\xi)DS_{\varphi,\bold e^k}\gamma(\tilde x-\tilde y,\eta-\xi)d\tilde yd\xi.
\end{eqnarray*}
Now, we put $F=DS_{g,\bold e^k}f$ and obtain
\begin{equation}\label{dd333}
DS_{\varphi,\bold e^k}f(\tilde x,\eta)=(DS_{g,\bold e^k}f(\tilde s,\zeta))*(DS_{\varphi,\bold e^k}\gamma(\tilde s,\zeta))(\tilde x,\eta).
\end{equation}
\qed


\section{Directional wave fronts}\label{se3}
In order to detect singularities determined by the hyperplanes orthogonal to vectors $ u_1,..., u_k$ using the $k$-DSTFT, we introduce $k$-directional regular sets and wave front sets for GS ultradistributions. Theorem \ref{d444} guaranties that the wave front set will not depend on the used window. Again, we simplify our exposition by the use of  \eqref{dd22}
and transfer the  STFT in $\bold u^k$ direction to STFT in  $\bold e^k$ direction.

Let $k=1$ and $y_0=y_{0,1}\in\mathbb R$. Put
$
\Pi_{e_1,y_0,\varepsilon}=\{t\in \mathbb R^n: |t_1- y_0|<\varepsilon\}.
$
It is an area of $\mathbb R^n$ between two hyperplanes orthogonal to $e_1$,
$$
 \Pi_{e_1,y_0,\varepsilon}= \bigcup_{y\in (y_0-\varepsilon,y_0+\varepsilon)} P_{e_1,y},
\;\; \; (y_0=(y_0, 0,\ldots,0), y=(y,0,...,0)),$$
 and
$P_{e_1,y}$ denotes the hyperplane orthogonal to $e_1$ passing through  $y$.

We keep the notation of Section \ref{se2}.
Put
$$\Pi_{\bold e^k,\tilde y,\varepsilon}=\Pi_{e_1,y_1,\varepsilon}\cap\ldots\cap\Pi_{e_k,y_k,\varepsilon}, \quad \Pi_{\bold e^k,\tilde y}=\Pi_{e_1,y_1}\cap\ldots
\cap\Pi_{e_k,y_k}.
$$
The first set is a paralelopiped  in $\mathbb R^k$ so that in $\mathbb R^n$ it is determined by $2k$ finite edges while the other edges are infinite. The set
$\Pi_{\bold e^k,\tilde y}$ equals  $\mathbb R^{n-k}$ translated by vectors
$\vec y_1,\ldots,\vec y_k.$ We will call it $n-k$-dimensional element of
$\mathbb R^n $ and denote it as $P_{\bold e^k,\tilde y}\in\mathbb R^k.$
 If $k=n$, then this is just the point $y=(y_1,\ldots,y_n).$

In the sequel, $\alpha>1.$

\begin{definition}\label{wp}
Let $f\in \mathcal S^{\prime\al}(\mathbb R^n)$. It is said that $f$ is $k$-directionally  microlocally regular  at $(P_{\bold e^k,\tilde y_0},\xi_0)\in
\mathbb R^n\times (\mathbb R^n\setminus \{0\})$, that is, at every point of the form $((\widetilde y_0, \cdot),\xi_0)$ ($\cdot$ denotes an arbitrary point of $\R^{n-k}$)
if there exists $g\in \mathcal D^{\al}(\mathbb R^k)$, $g(\tilde 0)\neq 0$, the product of
open balls $L_r(\tilde y_0)=L_r(y_{0,1})\times...\times L_r(y_{0,k})\in\mathbb R^k$, a  cone $\Gamma_{\xi_0}$ and  there exist
$N\in\mathbb N$  and $C_{N}>0$
such that
\begin{equation}\label{rhh}
\sup_{\tilde y \in L_r(\tilde y_0),\,\xi \in\Gamma_{\xi_0}}|DS_{g, \bold e^k}f((\tilde y,\cdot),\xi)|
=\sup_{\tilde y\in L_r(\tilde y_0),\,\xi \in\Gamma_{\xi_0}}|\mathcal F
(f(t)\overline{g(\tilde t-\tilde y)})(\xi)|\leq C_{N} e^{-N|\xi|^{1/\al}}.
\end{equation}
\end{definition}
Note that for $k=n$ our definition is the  classical H\" ormander's  definition of regularity, \cite[Section 8]{hor}.
\begin{remark}\label{wfud}
a) If $f$ is $k$--directionally  microlocally regular at  $(P_{\bold e^k,\tilde y_0},\xi_0)$, then there exists an open ball  $ L_r(\tilde y_0)$ and an open cone $\Gamma\subset\Gamma _{\xi_0}$
so that $f$ is $k$--directionally  microlocally regular at  $(P_{\bold e^k,\tilde z_0},\theta_0)$ for any $\tilde z_0\in L_{r}(\tilde y_0)$ and $\theta_0 \in \Gamma.$ This implies that the union  of all $k$--directionally  microlocally regular points $(P_{\bold e^k,\tilde z_0},\theta_0)$, $((\tilde z_0,\cdot),\theta_0)\in (L_{r}(\tilde y_0)\times\RR^k)\times\Gamma$ is an open set of $\mathbb R^n\times(\mathbb R^n\setminus\{0\})$.

b) Denote by $Pr_{k}$ the projection of $\mathbb R^n$ onto $\mathbb R^k$. Then, the  $k$--directionally  microlocally regular
point $(P_{\bold e^k,\tilde y_0},\xi_0)$, considered in $\mathbb R^n\times(\mathbb R^n\setminus\{0\})$ with respect to the first $k$ variables, equals $(Pr_k^{-1}\times I_\xi)(P_{\bold e^k,\tilde y_0},\xi_0)$ ($I_\xi $ is the identity matrix on $\mathbb R^n$).

As standard, $k$-directional wave front  is defined as the complement in
$\mathbb R^k\times(\mathbb R^n\setminus\{0\})$ of all    $k$--directionally  microlocally regular points $(P_{\bold e^k,\tilde y_0},\xi_0)$, and we denoted as
$WF_{\bold e^k}(f).$
\end{remark}

\begin{proposition}
The set
$WF_{\bold e^k}(f)$
is closed  in $\mathbb R^k\times(\mathbb R^n\setminus\{0\})$ (and $\mathbb R^n\times (\mathbb R^n\setminus \{0\})$).
\end{proposition}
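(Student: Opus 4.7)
The plan is to prove closedness by showing that the complement of $WF_{\bold e^k}(f)$ in $\mathbb R^k\times(\mathbb R^n\setminus\{0\})$---namely the set of $k$-directionally microlocally regular points---is open. The assertion of Remark \ref{wfud}(a) is precisely this openness, so the task reduces to giving a clean justification and then deducing both statements of the proposition.

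First I would fix a regular point $(P_{\bold e^k,\tilde y_0},\xi_0)$ and take witnesses $g\in\mathcal D^{\al}(\mathbb R^k)$ with $g(\tilde 0)\neq 0$, a radius $r>0$, an open cone $\Gamma_{\xi_0}$, and constants $N\in\mathbb N$, $C_N>0$ for which \eqref{rhh} holds. For any $(\tilde z_0,\theta_0)$ with $\tilde z_0\in L_r(\tilde y_0)$ and $\theta_0\in\Gamma_{\xi_0}$, I would pick $r'>0$ so small that $L_{r'}(\tilde z_0)\subset L_r(\tilde y_0)$ and choose an open subcone $\Gamma\subset\Gamma_{\xi_0}$ with $\theta_0\in\Gamma$. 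Taking suprema over the smaller set $L_{r'}(\tilde z_0)\times\Gamma$ can only decrease the left-hand side of \eqref{rhh}, so the very same window $g$ and the very same constants $N,C_N$ witness microlocal regularity at $(P_{\bold e^k,\tilde z_0},\theta_0)$. Hence every regular point has a neighborhood of regular points of the form $L_{r'}(\tilde z_0)\times\Gamma$.

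Since products $L_r(\tilde y_0)\times\Gamma$, where $L_r(\tilde y_0)$ is an open ball in $\mathbb R^k$ and $\Gamma$ is an open cone in $\mathbb R^n\setminus\{0\}$, form a basis for the topology of $\mathbb R^k\times(\mathbb R^n\setminus\{0\})$, the set of regular points is a union of such basic open sets and is therefore open. This proves that $WF_{\bold e^k}(f)$ is closed in $\mathbb R^k\times(\mathbb R^n\setminus\{0\})$. For the parenthetical statement, I would invoke Remark \ref{wfud}(b): viewed in the larger space, the wave front set coincides with $(Pr_k\times I_\xi)^{-1}(WF_{\bold e^k}(f))$, where $Pr_k\times I_\xi:\mathbb R^n\times(\mathbb R^n\setminus\{0\})\to\mathbb R^k\times(\mathbb R^n\setminus\{0\})$ is continuous. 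The preimage of a closed set under a continuous map is closed, giving the second conclusion.

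There is essentially no analytic obstacle here; the argument is pure topology on top of the definition. The only point that warrants care is verifying that shrinking the ball and the cone preserves the estimate with unchanged constants, which is immediate from the monotonicity of the supremum in \eqref{rhh}.
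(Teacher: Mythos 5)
Your proof is correct and is essentially the paper's own (implicit) argument: the paper derives the proposition directly from Remark \ref{wfud}(a) --- openness of the set of regular points, obtained exactly as you do by restricting the supremum in \eqref{rhh} to a smaller ball and subcone with the same window $g$ and constants $N, C_N$ --- and your use of Remark \ref{wfud}(b) with continuity of $Pr_k\times I_\xi$ handles the parenthetical statement in $\mathbb R^n\times(\mathbb R^n\setminus\{0\})$ in the intended way. One cosmetic slip worth noting: sets of the form $L_r(\tilde y_0)\times\Gamma$ do \emph{not} form a basis of the topology of $\mathbb R^k\times(\mathbb R^n\setminus\{0\})$ (open cones are dilation-invariant, so they cannot separate $\xi_0$ from $2\xi_0$), but this is immaterial since your argument only needs that each regular point lies in such an open product consisting entirely of regular points, which you have established.
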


$B_r(\tilde 0)$  denotes a closed ball in $\mathbb R^k$ with center at  zero  and radius $r>0.$
The following theorem relates  sets of $k$--directionally  microlocally regular points for two $k$-DSTFT of GS ultradistributions.

\begin{theorem} \label{nwr}  If (\ref{rhh}) holds for some $g\in\mathcal D^{\al}(\mathbb R^k)$, then it holds for every $h\in\mathcal D^{\al}(\mathbb R^k),$ $(h(\tilde 0)\neq 0)$ supported by a ball
$B_\rho(\tilde 0)$, where $\rho\leq\rho_0$ and $\rho_0$ depends on $r$ in  (\ref{rhh}).
\end{theorem}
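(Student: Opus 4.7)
The plan is to apply Theorem \ref{d444} with $\varphi:=h$ and a synthesis window $\gamma\in\D^{\al}(\R^k)$ (that is, $(g,\gamma)\neq 0$) whose support lies in $B_\delta(\tilde 0)$:
$$DS_{h,\bold e^k}f(\tilde x,\eta)=\frac{1}{(g,\gamma)}\bigl(DS_{g,\bold e^k}f*DS_{h,\bold e^k}\gamma\bigr)(\tilde x,\eta).$$
Because $\gamma$ depends only on the first $k$ coordinates, a short computation identifies the convolution kernel as $V_h\gamma(\tilde s,\tilde\zeta)\otimes\delta(\tilde{\tilde\zeta})$, where $V_h\gamma$ is the ordinary $k$-dimensional STFT; the formula therefore collapses to
$$DS_{h,\bold e^k}f(\tilde x,\eta)=\frac{1}{(g,\gamma)}\int_{\R^k}\!\!\int_{\R^k}DS_{g,\bold e^k}f(\tilde y,\tilde\xi,\tilde{\tilde\eta})\,V_h\gamma(\tilde x-\tilde y,\tilde\eta-\tilde\xi)\,d\tilde y\,d\tilde\xi,$$
in which $\tilde{\tilde\eta}$ acts as a frozen parameter. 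Compactness of $\supp h$ and $\supp\gamma$ gives $V_h\gamma(\tilde s,\cdot)=0$ for $|\tilde s|>\rho+\delta$, together with Gevrey decay $|V_h\gamma(\tilde s,\tilde\zeta)|\leq C_Me^{-M|\tilde\zeta|^{1/\al}}$ for some $M>0$.

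I next fix $r'<r$ and a narrower open cone $\Gamma'\subset\Gamma_{\xi_0}$ whose trace on $\mathbb S^{n-1}$ is relatively compact in $\Gamma_{\xi_0}\cap\mathbb S^{n-1}$, and set $\rho_0:=r-r'-\delta>0$. For $\rho\leq\rho_0$ and $\tilde x\in L_{r'}(\tilde y_0)$, the support of $V_h\gamma$ forces $\tilde y\in L_r(\tilde y_0)$ on the effective integration domain, so (\ref{rhh}) is applicable throughout. A standard geometric lemma on separated cones gives $c_0>0$ with $|\eta-\xi|\geq c_0(|\eta|+|\xi|)$ for all $\eta\in\Gamma'$ and $\xi\notin\Gamma_{\xi_0}$, and since $\eta-\xi=(\tilde\eta-\tilde\xi,0)$ one has $|\tilde\eta-\tilde\xi|=|\eta-\xi|$. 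I split the $\tilde\xi$-integral into $I_1+I_2$ according to whether $\xi=(\tilde\xi,\tilde{\tilde\eta})$ lies in $\Gamma_{\xi_0}$.

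For $I_1$ the hypothesis gives $|DS_{g,\bold e^k}f|\leq C_Ne^{-N|\xi|^{1/\al}}$; together with the decay of $V_h\gamma$ and the elementary inequality $|\xi|^{1/\al}+|\tilde\eta-\tilde\xi|^{1/\al}\geq c_1|\eta|^{1/\al}$ (proved uniformly in $\tilde{\tilde\eta}$ by a case split on $|\tilde\xi|$ versus $|\tilde\eta|/2$ and subadditivity of $x\mapsto x^{1/\al}$ for $\al\geq 1$), splitting each exponential in the integrand as a decay factor times an integrable remainder yields $|I_1|\leq C'_{N'}e^{-N'|\eta|^{1/\al}}$ with $N'=\tfrac12 c_1\min(N,M)$. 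For $I_2$ the Roumieu-type growth estimate $|DS_{g,\bold e^k}f(\tilde y,\xi)|\leq C_ae^{a|\xi|^{1/\al}}$ (available for $f\in\S^{\prime\al}_\al$ with any prescribed $a>0$, uniformly for $\tilde y$ in the bounded domain) is combined with $M|\tilde\eta-\tilde\xi|^{1/\al}\geq Mc_0(|\eta|^{1/\al}+|\xi|^{1/\al})$; choosing $a<Mc_0$ absorbs the growth, yielding $|I_2|\leq C''_{N'}e^{-N'|\eta|^{1/\al}}$ with $N'=Mc_0-a$. The principal technical obstacle is the uniform control of all constants with respect to the unbounded parameter $\tilde{\tilde\eta}$: since it contributes to $|\xi|$ and $|\eta|$ but not to $|\tilde\eta-\tilde\xi|$, the case-split inequality in $I_1$ and the cone separation in $I_2$ must cooperate precisely so that the decay in the integrand always dominates $|\eta|^{1/\al}$ rather than merely $|\tilde\eta|^{1/\al}$.
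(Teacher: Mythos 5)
Your outline follows the same skeleton as the paper's proof --- apply Theorem \ref{d444} (formula (\ref{dd333})) with a synthesis window $\gamma$ of small support, use the support geometry $\rho+\delta+r'<r$ (the paper's $\rho+\rho_1+r_0<r$) to keep $\tilde y\in L_r(\tilde y_0)$, and split the frequency integral by a cone condition --- but it has a genuine gap at the single point where the real work lies: the treatment of the region $\xi\notin\Gamma_{\xi_0}$. There you invoke a pointwise growth bound $|DS_{g,\bold e^k}f(\tilde y,\xi)|\leq C_a e^{a|\xi|^{1/\al}}$ for arbitrary $a>0$, ``available for $f\in\S^{\prime\al}_\al$''. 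This is not available, and is in general false, because the window $g$ localizes only the $\tilde t$-variables: $DS_{g,\bold e^k}f$ is merely an ultradistribution in the $\tilde{\tilde\xi}$-directions, not a function with pointwise values. The simplest counterexample is $f\equiv 1$ with $k<n$, for which $DS_{g,\bold e^k}f(\tilde y,\xi)=e^{-2\pi i\tilde y\cdot\tilde\xi}\,\widehat{\overline g}(\tilde\xi)\otimes\delta(\tilde{\tilde\xi})$. The same defect infects your reduction of the convolution to a $k$-dimensional one with $\tilde{\tilde\eta}$ ``frozen'': identifying the kernel as $V_h\gamma(\tilde s,\tilde\zeta)\otimes\delta(\tilde{\tilde\zeta})$ is formally correct, but the collapsed formula then requires restricting $DS_{g,\bold e^k}f$ to the hyperplane $\tilde{\tilde\xi}=\tilde{\tilde\eta}$, and the restriction of an ultradistribution to a hyperplane is undefined in general (again, $f\equiv 1$ with $\tilde{\tilde\eta}=0$ hits the delta). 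Hypothesis (\ref{rhh}) gives function values and decay only inside the cone $\Gamma_{\xi_0}$; off the cone you have no pointwise object at all, so your estimate of $I_2$ (and the very convergence of your $\tilde\xi$-integral) does not get off the ground. Your $I_1$ estimate, by contrast, is fine: the subadditivity argument $|\eta|^{1/\al}\leq|\xi|^{1/\al}+|\tilde\eta-\tilde\xi|^{1/\al}$, the Gevrey--Paley--Wiener decay of $V_h\gamma$ uniform in $\tilde s$, and the uniformity in $\tilde{\tilde\eta}$ all check out (only the constant in $M|\tilde\eta-\tilde\xi|^{1/\al}\geq Mc_0(|\eta|^{1/\al}+|\xi|^{1/\al})$ should read $Mc_0^{1/\al}/2$ or similar, which is harmless).

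The paper's proof exists precisely to bridge this gap, and you would need its machinery (or an equivalent regularization). It first reduces, via the structure theorem, to $f=P_0(D)F$ with $F$ continuous of sub-exponential growth (\ref{gr1}), transferring all derivatives onto the window factors by the oscillatory-integral technique; it keeps the full $n$-dimensional frequency integrals instead of collapsing the delta; and on the bad region it inserts a Gevrey cutoff $\kappa_d$ (a mollified characteristic function of a neighborhood of $K=\{\xi:|\eta-\xi|\geq c|\eta|\}$, with all derivatives bounded) and integrates by parts with an elliptic ultradifferential operator $P(D)$ of Roumieu type chosen so that $P(\xi)\geq e^{(a+1)|\xi|^{1/\al}}$, cf.\ (\ref{ulpobound}). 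After these manipulations every term in the splitting $I_2=I_{2,1}+I_{2,2}$ is an absolutely convergent iterated integral of the continuous function $F/P(2\pi\,\cdot)$ against compactly supported Gevrey data, so sub-exponential decay in $\eta$ can be read off without ever evaluating $DS_{g,\bold e^k}f$ pointwise off the cone ($I_{2,2}$ is then handled like $I_1$ since there $|\eta-\xi|\leq c|\eta|$). Concretely: replace your asserted bound on $DS_{g,\bold e^k}f$ for $\xi\notin\Gamma_{\xi_0}$ by this $P(D)$--$\kappa_d$ regularization, and restate your collapsed convolution as the paper's $n$-dimensional formula (\ref{dd333}) interpreted in the oscillatory sense; the rest of your argument then survives.
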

\begin{proof}

We follow the  idea of our  proof in \cite{APS}. Compact supports of  $g$ and $h$ simplify the integration. Moreover, by the structural theorem,  we know that $f=P_0(D)F$, where $F$ is a continuous function of sub-exponential growth:
\begin{equation}\label{gr1}
 (\forall  a>0, \quad \exists C_a>0) \quad  F(x)\leq C_ae^{a|\xi|^{1/\alpha}}
\end{equation}
 and $P_0(D)$ satisfies (\ref{RP}) and  \eqref{ulpobound}. So, we can use the technique of oscillatory integral, transfer the differentiation from $f$ on other factors in integral expressions and, from the beginning, assume that   $f$ is a continuous function which satisfies \eqref{ulpobound}.

We use Theorem \ref{d444}, that is, the form (\ref{dd333}).   Assume that (\ref{rhh}) holds. We repeat from \cite{APS} the constructions of balls. Without that one can not follow our new
 estimates.

So,  $\gamma$ is chosen so that
 $\mbox{ supp }\gamma\subset B_{\rho_1}(\tilde 0)$ and   $\rho_1<r-r_0$.
Let  $h\in\mathcal D^{\al}(\mathbb R^k)$ and $\mbox{ supp }h\subset B_{\rho}(\tilde 0)$.
  Our aim is to  find $\rho_0$ such that  (\ref{rhh}) holds for $DS_{h,\bold e^k}f(\tilde x,\eta)$, with
$\tilde x\in B_{r_0}(\tilde y_0), \eta\in\Gamma_1\subset\subset \Gamma_{\xi_0},$
for $ \rho\leq \rho_0$
($\Gamma_1\subset\subset \Gamma_{\xi_0}$ implies that $\Gamma_1\cap \mathbb S^{n-1}$
is a compact subset of  $\Gamma_{\xi_0}\cap \mathbb S^{n-1}$).

Next,
$$
|\tilde p|\leq \rho_1,\;\; |\tilde x-\tilde y_0|\leq r_0 \mbox{ and }\;\; |\tilde p-((\tilde x-\tilde y_0)-(\tilde y-\tilde y_0))|\leq \rho
$$
\begin{equation}\label{suporti}
\Rightarrow |\tilde y-\tilde y_0|\leq \rho+\rho_1+r_0.
\end{equation}
So, we choose $\rho_0$ such that
$\rho_0+\rho_1<r-r_0$
and
\begin{equation}\label{suporti2}\rho+\rho_1+r_0<r \;\mbox{ holds for }\;
 \rho\leq\rho_0.
\end{equation}

Let $\Gamma_1\subset\subset \Gamma_{\xi_0}$.
Then, there exists $c\in (0,1)$ such that
\begin{equation}\label{gam}
\eta\in \Gamma_1, |\eta|>1 \mbox{ and } |\eta-\xi|\leq c|\eta|\Rightarrow \xi \in\Gamma_{\xi_0}; \;\;
|\eta-\xi|\leq c|\eta|\Rightarrow |\eta|\leq (1-c)^{-1}|\xi|.
\end{equation}
Let $\tilde x\in B_{r_0}(\tilde y_0), \eta \in\Gamma_1$.
Then
$$
|DS_{h,\bold e^k}f((\tilde x,\cdot),\eta)|
=\left|\int_{\mathbb R^k}\int_{\mathbb R^n}DS_{g,\bold e^k}f((\tilde y,\cdot),\xi)DS_{h,\bold e^k}\gamma(\tilde x-\tilde y,\eta-\xi)d\xi d\tilde y\right|.
$$
Consider $$ J_1=\int_{\mathbb R^n}DS_{g,\bold e^k}f((\tilde y,\cdot),\eta-\xi)d\xi \mbox{ and }\;  J_2=\int_{\mathbb R^n}DS_{h,\bold e^k}\gamma(\tilde x-\tilde y,\xi)d\xi.$$
We choose an elliptic ultradifferential operator $P(D)$ so that
$P(\xi)\geq e^{(a+1)|\xi|^{1/\alpha}},$ where $a$ is from (\ref{gr1}). Then
$$J_1=\int_{\mathbb R^n}\int_{\mathbb R^n}
\frac{f(t)}{P(2\pi t)}\overline{g(\tilde t-\tilde y)}P(D_\xi)(e^{-2\pi i t\cdot(\eta-\xi)})dtd\xi.
$$
 This integral  diverges with respect to $\xi$, while $J_2$ converges because
\begin{eqnarray*}J_2&=&\int_{\mathbb R^n}\int_{B_{\rho_1}(\tilde 0)}
\frac{\gamma(\tilde p)\overline{h(\tilde p-(\tilde x-\tilde y))}}{P(-2\pi\xi)}P(D_p)(e^{-2\pi i p\cdot\xi})dpd\xi
\\&=&
\int_{\mathbb R^n}\int_{B_{\rho_1}(\tilde 0)}
P(D_p)(\frac{\gamma(\tilde p)\overline{h(\tilde p-(\tilde x-\tilde y))}}{P(-2\pi\xi)})e^{-2\pi i p\cdot\xi}dpd\xi.
\end{eqnarray*}
Rewrite
$$|DS_{h,\bold e^k}f((\tilde x,\cdot),\eta)|=\int_{\mathbb R^k}|(\int_{|\eta-\xi|\leq c|\eta|}+\int_{|\eta-\xi|\geq c|\eta|})(...)d\xi|d\tilde y=I_1+I_2.
$$
Then,
$$I_1\leq \int_{\mathbb R^k}\left(\sup_{|\eta-\xi|\leq c|\eta|}
|DS_{g,\bold e^k}f((\tilde y,\cdot),\eta-\xi)|\int_{|\eta-\xi|\leq c|\eta|}
|DS_{h,\bold e^k}\gamma(\tilde x-\tilde y,\xi)|d\xi\right)d\tilde y.
$$
Using (\ref{suporti}), (\ref{suporti2}) and \eqref{gam} we obtain
\begin{equation}\label{dod1}
\sup_{\tilde x\in B_{r_0}(\tilde y_0),\,\eta\in\Gamma_1}e^{N|\eta|^{1/\al}}I_1\leq \int_{B_r(\tilde y_0)}\left (\sup_{\xi\in \Gamma_{\xi_0}}
|DS_{g,\bold e^k}f((\tilde y,\cdot),\xi)|e^{N(1-c)^{-1}|\xi|^{1/\al}}\right .
\end{equation}
$$\left .\times \int_{|\xi|\geq (1-c)|\eta|}
|DS_{h,\bold e^k}\gamma(\tilde x-\tilde y,\xi)|d\xi\right )d\tilde y.
$$
Now by the finiteness of $J_2$, we obtain that $I_1$ satisfies the necessary estimate of (\ref{rhh}).

Now we consider $I_2$ with the new explanations.
$$I_2\leq \int_{\mathbb R^k}\left|\int_{|\xi|\geq c|\eta|}
DS_{g,\bold e^k}f((\tilde y,\cdot),\eta-\xi)DS_{h,\bold e^k}\gamma(\tilde x-\tilde y,\xi)d\xi\right| d\tilde y.
$$

Let $K=\{\xi: |\eta-\xi|\geq c|\eta|\}$.
Denote by $\kappa^0_d, 0<d<1,$ the characteristic function of
 $K_{d}=\bigcup_{\xi\in K}L_d(\xi)$, that is, $K_d$ is open $d$-neighborhood of $K.$
Then, put $$\kappa_d={\kappa}^0_{d}*\varphi_{d},$$
where $\varphi_d=\frac{1}{d^n}\varphi(\cdot/d)$,
$\varphi\in\mathcal D^{\al}(\mathbb R^n)$ is non-negative, supported by the ball $B_1(0)$ and equals
$1$ on $B_{1/2}(0).$ This construction implies that $\kappa_d$ equals one on $K$,  is supported by $K_{2d}.$
Moreover,  all the derivatives of $\kappa_d$ are bounded.
We note that
$$|\int_K...d\xi|=|\int_{K_{2d}}\kappa_d(\xi)...d\xi| +|\int_{K_{2d}\cap\{\xi: |\eta-\xi|\leq\eta\}}\kappa_d(\xi)...d\xi|
$$
Then,
$$\sup_{\tilde x\in B_{r/2}(\tilde y_0),\,
 \eta\in \Gamma_1}I_2\leq
 \int_{\mathbb R^k}
 |\int_{\R^n}\kappa_d(\xi)
DS_{g,\bold e^k}f((\tilde y,\cdot),\eta-\xi)
DS_{h,\bold e^k}\gamma(\tilde x-\tilde y,\xi)d\xi|d\tilde y
$$
$$
+ \int_{\mathbb R^k}|
\int_{K_{2d}\cap\{\xi: |\eta-\xi|\leq\eta\}}\kappa_d(\xi)
DS_{g,\bold e^k}f((\tilde y,\cdot),\eta-\xi)
DS_{h,\bold e^k}\gamma(\tilde x-\tilde y,\xi)d\xi| d\tilde y=I_{2,1}+I_{2,2}.
$$
We continue with $I_{2,1}$.
For every $\tilde x\in B_{r_0}(\tilde y_0)$ and $\eta\in\Gamma_1$ and using  \eqref{ulpobound} we see that all  integrals on the right hand side of
$$\sup_{\tilde x\in B_{r/2}(\tilde y_0),\,
 \eta\in \Gamma_1}e^{N|\eta|^{1/\al}}I_{2,1}
 $$
 $$\leq
\int_{\mathbb R^k}|\int_{\mathbb R^n_\xi}
(\int_{\mathbb R^n_t}\frac{|f(t)|}{P(2\pi t)}|\overline{g(\tilde t-\tilde y)}|dt
\big(\frac{e^{N|\xi|^{1/\al}}}{e^{N|\eta-\xi|^{1/\al}}}P(D_\xi)
\frac{\kappa_d(\xi)}{P(-2\pi\xi)}\big)
$$
$$ \int_{\mathbb R^n_p}|P(D_p)\big(\gamma(\tilde p)\overline{h(\tilde p-(\tilde x-\tilde y))}\big)|dp \big)d\xi| d\tilde y
$$
are finite.

Now we treat $I_{2,2}$ in the same way as $I_{1}$ since integration goes over a subset of
$\{\xi: |\eta-\xi|\leq c|\eta|\}$. Only a bounded factor $\kappa_d$ appears in
the last integral in (\ref{dod1}):
$$\int_{|\xi|\geq (1-c)|\eta|}\kappa_d(\xi)
|DS_{h,\bold e^k}\gamma(\tilde x-\tilde y,\xi)|d\xi.
$$
This  gives
$$\sup_{\tilde x\in B_{r/2}(\tilde y_0),\,
 \eta\in \Gamma_1}e^{N|\eta|^{1/\al}}I_{2,2}<\infty.
$$
This completes the proof of the theorem.
\end{proof}

The next corollary is a modification of the one in the distribution theory (see \cite{APS}).
\begin{corollary}
\label{suz}
Let $g\in\mathcal D^{\al}(\mathbb R^k)$ with $\mbox{supp }g \subset B_a(\tilde 0)$, have  synthesis window $\gamma$  with $ \mbox{supp } \gamma\subset B_{\rho_1}(\tilde 0)$ and
$\rho_1\leq a.$ Then
\begin{equation}\label{2rh}
\sup_{\tilde y\in L_{2r}(\tilde y_0),\,\xi \in\Gamma_{\xi_0}}|DS_{g, \bold e^k}f((\tilde y,\cdot),\xi)|\leq C_Ne^{-N|\xi|^{1/\al}}.
\end{equation}
 Moreover, assume that $a<r.$ Then, for any $h\in\mathcal D^{\al}(\mathbb R^k)$ with support $B_{\rho}(\tilde 0)$, $\rho<a$, there exists $r_0$ and $\Gamma_1\subset\subset \Gamma_{\xi_0}$ such that (\ref{2rh}) holds for $DS_{h,\bold e^k}f((\tilde x,\cdot),\eta)$ with the supremum over
$\tilde x\in B_{r_0}(\tilde y_0)$ and $ \eta \in \Gamma_1$.
\end{corollary}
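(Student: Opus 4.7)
The plan is to mirror the proof of Theorem~\ref{nwr}, exploiting the sharper support data on $g$, $\gamma$, and $h$ to produce concrete constraints on $r_0$ and $\Gamma_1$. I begin with the reconstruction identity of Theorem~\ref{d444},
$$DS_{h,\bold e^k}f(\tilde x,\eta)=\int_{\R^n}\int_{\R^k}DS_{g,\bold e^k}f((\tilde y,\cdot),\eta-\xi)\,DS_{h,\bold e^k}\gamma(\tilde x-\tilde y,\xi)\,d\tilde y\,d\xi.$$
Since $\supp h\subset B_\rho(\tilde 0)$ and $\supp\gamma\subset B_{\rho_1}(\tilde 0)$, the factor $DS_{h,\bold e^k}\gamma(\tilde x-\tilde y,\xi)$ vanishes unless $\tilde x-\tilde y\in B_{\rho+\rho_1}(\tilde 0)$, confining $\tilde y$ to $B_{\rho+\rho_1}(\tilde x)$. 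The assumption $a<r$, together with $\rho<a$ and $\rho_1\leq a$, leaves room to pick $r_0>0$ with $r_0+\rho+\rho_1<2r$; then $\tilde x\in B_{r_0}(\tilde y_0)$ forces $\tilde y\in L_{2r}(\tilde y_0)$, so the hypothesis \eqref{2rh} is available throughout the effective integration range of $\tilde y$.

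Next I select a subcone $\Gamma_1\subset\subset\Gamma_{\xi_0}$ and a constant $c\in(0,1)$ satisfying the two consequences recorded in \eqref{gam}, and split the $\xi$-integration into
$$I_1=\int_{|\eta-\xi|\leq c|\eta|}(\cdots)\,d\xi,\qquad I_2=\int_{|\eta-\xi|>c|\eta|}(\cdots)\,d\xi,$$
where $(\cdots)$ denotes the integrand $DS_{g,\bold e^k}f((\tilde y,\cdot),\eta-\xi)\,DS_{h,\bold e^k}\gamma(\tilde x-\tilde y,\xi)$. In $I_1$ the shifted argument $\eta-\xi$ still lies in $\Gamma_{\xi_0}$, so \eqref{2rh} gives $|DS_{g,\bold e^k}f((\tilde y,\cdot),\eta-\xi)|\leq C_N e^{-N|\eta-\xi|^{1/\alpha}}$; combined with the second implication in \eqref{gam}, namely $|\eta|\leq(1-c)^{-1}|\xi|$, and the rapid frequency decay of $DS_{h,\bold e^k}\gamma$ furnished by Theorem~\ref{ridir}, this produces $\sup_{\tilde x\in B_{r_0}(\tilde y_0),\,\eta\in\Gamma_1}e^{N|\eta|^{1/\alpha}}|I_1|<\infty$.

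For $I_2$ I follow verbatim the oscillatory-integral technique of Theorem~\ref{nwr}: write $f=P_0(D)F$ with $F$ subexponentially growing by \eqref{gr1}, choose an elliptic ultradifferential operator $P(D)$ with $P(\xi)\geq e^{(a+1)|\xi|^{1/\alpha}}$, insert a smooth cutoff $\kappa_d$ adapted to a neighborhood of $\{|\eta-\xi|\geq c|\eta|\}$, and transfer derivatives via the identity $P(D_p)e^{-2\pi i p\cdot\xi}=P(-2\pi\xi)e^{-2\pi i p\cdot\xi}$ onto the compactly supported factors $h,\gamma,g$. Compactness of the supports guarantees absolute convergence of the $t$- and $p$-integrations, while the factor $P(-2\pi\xi)^{-1}$ supplies the exponential $\xi$-decay required to bound $I_2$ after integrating over the bounded $\tilde y$-range. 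The principal obstacle is the bookkeeping: one must verify that the three constraints $\rho<a$, $\rho_1\leq a$, $a<r$ simultaneously admit a choice of $r_0>0$ and $\Gamma_1\subset\subset\Gamma_{\xi_0}$ for which every geometric inclusion demanded in the $I_1$ and $I_2$ estimates is simultaneously satisfied.
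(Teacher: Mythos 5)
Your proposal is correct and takes essentially the same route as the paper: the paper's own proof of Corollary~\ref{suz} consists exactly of your support computation $|\tilde y-\tilde y_0|\leq\rho+\rho_1+r_0<2r$ (with $r_0$ chosen small using $\rho<a$, $\rho_1\leq a$, $a<r$, so that the hypothesis \eqref{2rh} over $L_{2r}(\tilde y_0)$ covers the effective $\tilde y$-range) followed by the statement that the rest runs verbatim as the $I_1$/$I_2$ argument of Theorem~\ref{nwr}, which is precisely what you do. Your one slip --- asserting that $\eta-\xi\in\Gamma_{\xi_0}$ on the region $|\eta-\xi|\leq c|\eta|$, whereas \eqref{gam} places $\xi$ (the frequency comparable to $|\eta|$) in $\Gamma_{\xi_0}$ --- merely reproduces the same notational inconsistency already present in the paper's displays for $I_1$ and does not affect the substance of the argument.
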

\begin{proof}
Similarly
as in (\ref{suporti}),
$$|\tilde y -\tilde y_0|\leq \rho+\rho_1+r_0<\rho+a-r_0+r_0=a+\rho<2r.
$$
This implies $|\tilde y -\tilde y_0|<2r,$ so that the supremum in the estimate of $I_1$ holds. The proof now can be performed in the same way as in Theorem \ref{nwr}.
\end{proof}

With the standard  proof we have
\begin{corollary}\label{w1}
If
$(P_{\bold e^k,\tilde y},\xi)$ is a  $k$-directionally microlocally regular point of $f\in\mathcal S'^{\al}(\mathbb R^n)$ for every $\xi\in\mathbb R^n\setminus \{0\}$,
then  $ f\in\mathcal E^{\al}(\mathbb R^n).$
\end{corollary}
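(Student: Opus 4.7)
The plan is to carry out the standard compactness--on--the--sphere argument to upgrade the pointwise regularity hypothesis (valid for each $\xi$ on a cone depending on $\xi$) to a single global exponential decay estimate of the $k$-DSTFT with one common window, and then to extract $\mathcal E^\al$-regularity in a neighborhood of $P_{\bold e^k,\tilde y}$ by means of the reconstruction formula.

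First, I would use Remark \ref{wfud}(a): for each $\xi_0 \in \mathbb S^{n-1}$ the hypothesis provides a cone $\Gamma_{\xi_0}$, a radius $r_{\xi_0}$ and a window $g_{\xi_0}\in\mathcal D^\al(\R^k)$ for which the estimate (\ref{rhh}) holds. Compactness of $\mathbb S^{n-1}$ yields a finite subcover $\{\Gamma_{\xi_j}\}_{j=1}^M$, and setting $r_0=\min_j r_{\xi_j}$ I get the decay estimate on each $\Gamma_{\xi_j}$ uniformly for $\tilde y'\in L_{r_0}(\tilde y)$. Iterated application of Theorem \ref{nwr} (together with Corollary \ref{suz} to manage the shrinking of balls and cones) allows me to replace every $g_{\xi_j}$ by a single window $g\in\mathcal D^\al(\R^k)$ with $g(\tilde 0)\neq 0$ and sufficiently small support, at the cost of shrinking $r_0$ to some $r_1>0$ and slightly shrinking each cone. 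Since the finitely many shrunken cones still cover $\mathbb S^{n-1}$, and since the bounded-$\xi$ region is trivially absorbed in the constant, the outcome is
\begin{equation*}
(\forall N\in\N)(\exists C_N>0)\quad \sup_{\tilde y'\in L_{r_1}(\tilde y),\,\xi\in\R^n\setminus\{0\}}\,|DS_{g,\bold e^k}f((\tilde y',\cdot),\xi)|\,e^{N|\xi|^{1/\al}}\leq C_N.
\end{equation*}

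Next, I would invoke the reconstruction formula of Proposition \ref{propg}, extended to $\S^{\prime\al}(\R^n)$ via Propositions \ref{nova1} and \ref{nova2}. Choose a synthesis window $\varphi\in\S^\al_\al(\R^k)$ for $g$ and a cutoff $\chi\in\mathcal D^\al(\R^k)$ with $\chi\equiv 1$ on $L_{r_1/2}(\tilde y)$ and $\supp\chi\subset L_{r_1}(\tilde y)$. Split $f=f_1+f_2$ where
\begin{equation*}
f_1(t)=\frac{1}{(g,\varphi)}\int_{\R^n}\int_{\R^k}\chi(\tilde y')\,DS_{g,\bold e^k}f(\tilde y',\xi)\,\varphi(\tilde t-\tilde y')\,e^{2\pi i\xi\cdot t}\,d\tilde y'\,d\xi,
\end{equation*}
and $f_2=f-f_1$. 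For $f_1$, the compactness of $\supp\chi$ in $\tilde y'$ together with the uniform exponential decay of $DS_{g,\bold e^k}f$ in $\xi$ justifies differentiating under the integral sign any number of times; each derivative produces factors of $\xi$ absorbed by $e^{-N|\xi|^{1/\al}}$, and the Gelfand--Shilov estimates on $\varphi$ handle the $\tilde t$-dependence. This gives $f_1\in\mathcal E^\al(\R^n)$. For $f_2$, when $\tilde t\in L_{r_1/4}(\tilde y)$ the argument $\tilde t-\tilde y'$ of $\varphi$ satisfies $|\tilde t-\tilde y'|\gtrsim r_1/4$, so the Gelfand--Shilov decay $|\varphi^{(\gamma)}(\tilde t-\tilde y')|\leq C a^{|\gamma|}\gamma!^\al e^{-b|\tilde t-\tilde y'|^{1/\al}}$ kicks in, and combined with the structural representation (\ref{rep11}) and an elliptic ultradifferential operator inserted as in the oscillatory integral argument of Theorem \ref{nwr}, this yields Roumieu class $\al$ bounds for arbitrary derivatives of $f_2$ on $L_{r_1/4}(\tilde y)\times \R^{n-k}$.

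The main obstacle is Step 3 for $f_2$: one cannot treat $DS_{g,\bold e^k}f$ as an integrable function, and the integration in $\tilde y'$ extends to infinity. To make the oscillatory integral converge and to produce ultradifferentiable estimates one must reproduce, in a localized form, the technique from the proof of Theorem \ref{nwr}: pass the derivatives in $t$ through the integral onto $\varphi$, invoke the representation $f=P_1(D)(P_2 F)$ with $F$ of sub-exponential growth, and introduce a regularizing elliptic ultradifferential operator whose symbol dominates this growth so that integration by parts in $\xi$ and $\tilde y'$ produces the desired Gevrey factorial bounds. All other steps are direct adaptations of the corresponding distributional projection principle.
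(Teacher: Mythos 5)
Your Step 1 (compactness of $\mathbb S^{n-1}$, then Theorem \ref{nwr} together with Corollary \ref{suz} to pass to one common window $g\in\mathcal D^{\al}(\R^k)$ and a uniform estimate $|DS_{g,\bold e^k}f((\tilde y',\cdot),\xi)|\leq C_Ne^{-N|\xi|^{1/\al}}$ for $\tilde y'\in L_{r_1}(\tilde y)$ and all $\xi$) is the correct and standard opening. The genuine gap is your treatment of $f_2$, and it is not a technicality that can be deferred to ``the technique of Theorem \ref{nwr}''. On $\supp(1-\chi)$ the factor $DS_{g,\bold e^k}f(\tilde y',\xi)$ has \emph{no} decay in $\xi$ at all --- for $f\in\mathcal S^{\prime\al}(\R^n)$ it genuinely grows sub-exponentially in $\xi$ --- while the only decay at your disposal, $|\varphi^{(\gamma)}(\tilde t-\tilde y')|\lesssim a^{-|\gamma|}\gamma!^{\al}e^{-b|\tilde t-\tilde y'|^{1/\al}}$, lives in the $\tilde y'$ variable and is uncoupled from $\xi$. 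Inserting an elliptic ultradifferential operator and moving derivatives onto $\varphi$ redistributes growth but cannot manufacture the factor $e^{-N|\xi|^{1/\al}}$ for every $N$ that Roumieu bounds on $D^p f_2$ require: in the proof of Theorem \ref{nwr} the $\xi$-decay on the bad region was supplied by $DS_{h,\bold e^k}\gamma(\tilde x-\tilde y,\xi)$, the directional STFT of a fixed test function, and your integrand $\varphi(\tilde t-\tilde y')e^{2\pi i\xi\cdot t}$ contains no analogue of it. As written, the absolute estimates for $f_2$ diverge in $\xi$, so the step fails.

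What rescues your decomposition is a cancellation you never invoke: support separation. Since the common window from Theorem \ref{nwr} satisfies $\supp g\subset B_{\rho}(\tilde 0)$, Fourier inversion in $\xi$ (justified via the structure theorem \eqref{rep11}, as in the paper) collapses the inner integral, $\int_{\R^n}DS_{g,\bold e^k}f(\tilde y',\xi)e^{2\pi i\xi\cdot t}d\xi=f(t)\overline{g(\tilde t-\tilde y')}$, whence $f_2(t)=f(t)\,m(\tilde t)$ with $m(\tilde t)=(g,\varphi)^{-1}\int_{\R^k}(1-\chi(\tilde y'))\varphi(\tilde t-\tilde y')\overline{g(\tilde t-\tilde y')}\,d\tilde y'$; if $\rho<r_1/4$ then $m\equiv 0$ on $L_{r_1/4}(\tilde y)$, so $f=f_1$ on the strip and no estimate of $f_2$ is needed. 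Simpler still --- and this is the ``standard proof'' the paper alludes to, in the pattern of H\"ormander's Lemma 8.1.1 --- skip the reconstruction formula entirely: for each fixed $\tilde y'\in L_{r_1}(\tilde y)$ your uniform estimate says that $u=f\,\overline{g(\tilde t-\tilde y')}$ has $|\hat u(\xi)|\leq C_Ne^{-N|\xi|^{1/\al}}$ on all of $\R^n$, so differentiating the inversion integral (using $\int_{\R^n}|\xi|^{|p|}e^{-N|\xi|^{1/\al}}d\xi\leq Ch^{|p|}p!^{\al}$) gives $u\in\mathcal E^{\al}(\R^n)$, and since the Gevrey classes with $\al>1$ are closed under division by non-vanishing elements, $f=u/\overline{g(\tilde t-\tilde y')}$ is of class $\mathcal E^{\al}$ on the slab where $g(\tilde t-\tilde y')\neq 0$. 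Finally, note that either route yields $f\in\mathcal E^{\al}$ only on the strip $\Pi_{\bold e^k,\tilde y,r_1/4}$; to reach the stated global conclusion $f\in\mathcal E^{\al}(\R^n)$ you must read the hypothesis as holding for every $\tilde y\in\R^k$ and cover $\R^k$ by such strips --- your write-up should say this explicitly.
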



\subsection{Relations with the partial wave front}
Recall again that  the partial wave front of an $f\in\DD^{\prime}(\RR^n)$  was not considered in the literature. So, it is clear that this was not done for ultradistribution spaces in particular, for GS spaces of ultradistributions.  For the purpose of this investigations, we will consider
the set of $k$-microlocally regular points (distinguishing them from the $k$-directionally microlocally regular points, for a moment) for an $f\in\mathcal S^{\prime\alpha}$:
The point $((\widetilde y_0,\widetilde{\widetilde{y}}_0),\xi_0)\in(\RR^k\times\RR^{n-k})\times (\RR^n\setminus\{0\})$ is $k$-microlocally regular for $f$ if
there exists $ \chi\in\DD^{\alpha}(\RR^k)$ so that $ \chi(\tilde y_0)\neq 0$  and a cone $\Gamma_{\xi_0}$ around $\xi_0$ so that there exist $N\in\NN$ and $C_{N,\chi}>0$  such that
\begin{equation}\label{sftkrn137}
|\mathcal{F}(\chi(\tilde y) f(y))(\xi)|\leq C_{N,\chi}e^{-N|\xi|^{1/\alpha}}, \quad \xi\in\Gamma_{\xi_0}, \, y=(\tilde y, \tilde{\tilde y})\in\R^k\times \R^{n-k}.
\end{equation}
Since, in this definition, $\chi$ does not depend on $\widetilde{\widetilde y},$
we  will write in the sequel that $f$ is $k$--microlocally regular at
$((\widetilde y_0,\cdot),\xi_0)$.
\begin{remark} The implication (\ref{rhh}) $\Rightarrow$ (ii) is clear. We will prove, as a part of the next theorem ($(ii)\Rightarrow {(i)}$), that the opposite implication also holds, which means that the two notions coincide.
\end{remark}
\begin{theorem}
Let $f\in\mathcal S^{\prime\alpha}(\RR^d)$ and $((\widetilde y_0,\cdot),\xi_0)\in(\RR^k\times\RR^{n-k})\times(\RR^n\backslash\{0\})$. The following conditions are equivalent.

(i) $((\widetilde y_0,\cdot),\xi_0)\not\in WF_{\bold e^k}(f)$.

(ii) There exist a compact neighbourhood $\widetilde K$ of $\widetilde y_0$ and a cone neighbourhood $ \Gamma$ of $\xi_0$ such that there exist $N\in\NN$ so that for every
${\chi}\in\mathcal D^{\al}({\widetilde K})$ there exists $C_{N, \chi}>0$ such that (\ref{sftkrn137}) is valid.

(iii) There exist a compact neighbourhood $\widetilde K$ of $\widetilde y_0$ and a cone neighbourhood $\Gamma$ of $\xi_0$ such that there exist $N\in\N$, $h>0$  and  $C_{N,h}>0$   such that
\[
    |DS_{\chi, \bold e^k} f((\widetilde y,\cdot),\xi)|\leq C_{N,h}
    \sup_{p\in\N_0^n}\frac{h^{|p|}}{p!^\al}\|D^{p}\chi\|_{L^{\infty}(\RR^k)}e^{-N|\xi|^{1/\alpha}},
    \]
    \[\forall \widetilde y\in \widetilde K,\,\, \forall \xi\in\Gamma,\,\, \forall \chi\in \DD^{\alpha}(\widetilde K-\{y_0\}),
   \]
    where $\widetilde K-\{\widetilde y_0\}=\{\widetilde y\in\RR^k|\,
    \widetilde y+\widetilde y_0\in \widetilde K\}$.

(iv)There exist a compact neighborhood $\widetilde K$ of $\widetilde y_0$, a cone neighborhood $\Gamma$ of $\xi_0$ and $\chi\in\DD^{\alpha}(\RR^k)$, with $\chi(\widetilde 0)\neq 0$ such that there exist $N\in\NN$ and  $C_{N, \chi}>0$ such that
    \[
    |DS_{\bold e^k,\chi}f((\widetilde y,\cdot),\xi)|\leq C_{N,\chi}e^{-N|\xi|^{1/\alpha}},\,\, \forall \tilde y\in \widetilde K,\, \forall \xi\in\Gamma.
    \]

\end{theorem}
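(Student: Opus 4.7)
My plan is to prove the cycle $(i)\Leftrightarrow(iv)\Rightarrow(iii)\Rightarrow(ii)\Rightarrow(iv)$. The equivalence $(i)\Leftrightarrow(iv)$ is immediate from Definition \ref{wp}: condition $(iv)$ is merely $(\ref{rhh})$ with the window $g$ renamed $\chi$ and the ball $L_r(\tilde y_0)$ replaced by the compact neighborhood $\widetilde K$.

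For $(iv)\Rightarrow(iii)$, I would revisit the proof of Theorem \ref{nwr} while tracking the dependence on the window (there called $h$, here denoted $\chi$). The bound on $|DS_{\chi,\bold e^k}f((\tilde x,\cdot),\eta)|$ is assembled there from sub-integrals $I_1, I_{2,1}, I_{2,2}$, into which the window enters only through the factors $\overline{D^{\tilde j}\chi(\tilde s-\tilde y)}$ appearing after the integrations by parts producing the integrable weight $1/s^{(2,\dots,2)}$. Each sub-integral is then majorized by a finite sum of terms $\|D^{p}\chi\|_{L^\infty(\R^k)}$ with coefficients uniform in $\tilde x\in\widetilde K$ and $\eta\in\Gamma$; pairing with the Gelfand--Shilov prefactors produced by the elliptic ultradifferential operator used for the oscillatory integrals, and optimizing over $p$, yields exactly the seminorm estimate of $(iii)$.

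For $(iii)\Rightarrow(ii)$, I specialize at $\tilde y=\tilde y_0$. Given $\chi\in\mathcal{D}^\alpha(\widetilde K)$, define $\tilde\chi(\tilde s)=\overline{\chi(\tilde s+\tilde y_0)}\in\mathcal{D}^\alpha(\widetilde K-\{\tilde y_0\})$; then
\[
DS_{\tilde\chi,\bold e^k}f((\tilde y_0,\cdot),\xi)=\int f(t)\chi(\tilde t)e^{-2\pi i t\cdot\xi}\,dt=\mathcal{F}(\chi(\tilde y)f(y))(\xi),
\]
and since the relevant seminorm $\sup_p h^{|p|}\|D^{p}\cdot\|_{L^\infty}/p!^\alpha$ is invariant under translation and complex conjugation, the bound of $(iii)$ delivers $(\ref{sftkrn137})$.

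The main step, and the principal obstacle, is $(ii)\Rightarrow(iv)$. I would fix $\chi_0\in\mathcal{D}^\alpha(\widetilde K)$ with $\chi_0\equiv 1$ on a neighborhood of $\tilde y_0$, and choose $\chi\in\mathcal{D}^\alpha(B_\rho(\tilde 0))$ with $\chi(\tilde 0)\neq 0$, where $\rho$ is so small that a compact $\widetilde K'\ni\tilde y_0$ satisfies $\widetilde K'+B_\rho(\tilde 0)\subset\{\chi_0=1\}$. For $\tilde y\in\widetilde K'$ the support condition forces $DS_{\chi,\bold e^k}f((\tilde y,\cdot),\xi)=DS_{\chi,\bold e^k}(\chi_0 f)((\tilde y,\cdot),\xi)$, and a direct Fourier-convolution computation (noting that $\chi(\cdot-\tilde y)$, extended trivially in $\widetilde{\widetilde t}$, has $n$-dimensional Fourier transform $\overline{\hat\chi(-\tilde\zeta)}e^{-2\pi i\tilde y\cdot\tilde\zeta}\otimes\delta(\widetilde{\widetilde\zeta})$) yields
\[
DS_{\chi,\bold e^k}(\chi_0 f)((\tilde y,\cdot),\xi)=\int_{\R^k}\overline{\hat\chi(\tilde\zeta)}\,e^{2\pi i\tilde y\cdot\tilde\zeta}\,\mathcal{F}(\chi_0 f)(\tilde\xi+\tilde\zeta,\widetilde{\widetilde\xi})\,d\tilde\zeta.
\]
I would then split at $|\tilde\zeta|=c|\xi|$ for $c>0$ small enough that $(\tilde\xi+\tilde\zeta,\widetilde{\widetilde\xi})\in\Gamma$ whenever $\xi$ lies in a smaller cone $\Gamma'$ and $|\tilde\zeta|<c|\xi|$. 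On the inner region, applying $(ii)$ to $\chi_0$ produces a factor $e^{-N(1-c)^{1/\alpha}|\xi|^{1/\alpha}}$ while the $\tilde\zeta$-integration converges by the Paley--Wiener decay of $\hat\chi$. On the outer region one combines $|\hat\chi(\tilde\zeta)|\leq Ce^{-a|\tilde\zeta|^{1/\alpha}}$ (Paley--Wiener for $\chi\in\mathcal{D}^\alpha(\R^k)$) with the sub-exponential growth $|\mathcal{F}(\chi_0 f)(\eta)|\leq Be^{b|\eta|^{1/\alpha}}$ (valid since $\chi_0 f\in\mathcal{S}^{\prime\alpha}(\R^n)$); using $(x+y)^{1/\alpha}\leq x^{1/\alpha}+y^{1/\alpha}$ and $|\tilde\zeta|\geq c|\xi|$, the integrand is bounded by $e^{-(a-b)|\tilde\zeta|^{1/\alpha}/2}\cdot e^{-((a-b)c^{1/\alpha}/2-b)|\xi|^{1/\alpha}}$. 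Choosing $\chi$ with Paley--Wiener rate $a$ sufficiently large (which is possible since $(iv)$ only asks for existence of one window) makes both exponents strictly positive, yielding the required bound $e^{-N'|\xi|^{1/\alpha}}$. The crux of the argument is this last balancing on the outer region, which is the essential obstacle.
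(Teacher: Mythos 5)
Your cycle $(i)\Leftrightarrow(iv)\Rightarrow(iii)\Rightarrow(ii)\Rightarrow(iv)$ is a genuinely different architecture from the paper's chain $(i)\Rightarrow(ii)\Rightarrow(iii)\Rightarrow(iv)\Rightarrow(i)$, but two of your steps have real gaps. First, $(iii)\Rightarrow(ii)$ by specialization and translation fails in the Roumieu setting. Condition $(iii)$ fixes a single $h>0$; for any $\chi\in\DD^{\alpha}(\widetilde K)$ whose derivative bounds $\|D^{p}\chi\|_{L^{\infty}}\leq CL^{|p|}p!^{\alpha}$ hold only with $L>1/h$, the seminorm $\sup_{p}h^{|p|}\|D^{p}\chi\|_{L^{\infty}}/p!^{\alpha}$ is infinite, so $(iii)$ is vacuous for that window, whereas $(ii)$ demands a finite $C_{N,\chi}$ for \emph{every} $\chi\in\DD^{\alpha}(\widetilde K)$. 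Translation and conjugation invariance of the seminorm cannot repair this. To save your route you would need to upgrade $(iv)\Rightarrow(iii)$ to ``for every $h>0$ there exists $C_{N,h}$'' (which your kernel estimates plausibly give, since the coefficients of the ultradifferential operators satisfy (\ref{RP}) for every $a>0$) and then apply the $h$-version adapted to each $\chi$. The paper sidesteps this direction entirely: it proves $(ii)\Rightarrow(iii)$ functional-analytically (the set $H_N$ of modulated, exponentially weighted copies of $f$ is weakly bounded in $\mathcal D^{\prime\alpha}$, hence equicontinuous by barrelledness) and reaches $(ii)$ from $(i)$ via an adaptation of H\"ormander's Lemma 8.1.1 together with the factorization $\psi f=(\psi/\chi)\chi f$. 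Relatedly, in your $(iv)\Rightarrow(iii)$ the window enters through $P(D_p)\bigl(\gamma\,\overline{\chi(\cdot)}\bigr)$ with $P$ of infinite order, so you get an infinite series of derivative norms controlled by the Roumieu seminorm, not ``a finite sum of terms $\|D^{p}\chi\|_{L^\infty}$''; this is minor, but it is exactly where the $h$-dependence must be tracked.

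Second, and more seriously, your main step $(ii)\Rightarrow(iv)$ rests on the global pointwise bound $|\mathcal{F}(\chi_0 f)(\eta)|\leq Be^{b|\eta|^{1/\alpha}}$ for all $\eta\in\R^n$, justified by ``$\chi_0 f\in\mathcal{S}^{\prime\alpha}(\R^n)$''. This is unjustified: $\chi_0$ localizes only the first $k$ variables, so $\chi_0 f$ is not compactly supported in $\R^n$, and $\mathcal{F}(\chi_0 f)$ is in general an ultradistribution, not a function (take $f=g(\tilde t)\otimes u(\tilde{\tilde t})$ with $\hat u$ a sum of point masses); membership in $\mathcal{S}^{\prime\alpha}$ gives no pointwise control off the cone $\Gamma$. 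This is precisely where the directional setting departs from H\"ormander's Lemma 8.1.1, in which full compact support makes $\hat u$ a polynomially bounded smooth function, so the convolution splitting is legitimate there but formal here. To repair the outer region you must invoke the structure theorem $f=P_0(D)F$ with $F$ continuous of sub-exponential growth (\ref{gr1}) and regularize with an elliptic ultradifferential operator, as in the proof of Theorem \ref{nwr} --- in effect redoing that oscillatory-integral argument rather than quoting a convolution identity. Once this is in place, the balancing you single out as the crux is unproblematic: since (\ref{gr1}) holds for every $a>0$, the growth rate $b$ can be taken arbitrarily small, so there is no need to manufacture a window with large Paley--Wiener rate (which, for small support within the Roumieu class, would in any case push you into a Beurling-type subclass).
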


\begin{proof} The proof follows the steps of  those one in \cite{PP}.

 $(i)\Rightarrow (ii)$
The fact that $((\widetilde y_0,\cdot),\xi_0)\not\in WF_{\bold e^k}(f)$ implies the existence
of $\chi\in\DD^{\alpha}(\RR^k)$ ($\chi(\cdot)=g(\cdot-\widetilde y_0)$) with $\chi(\tilde y_0)\neq 0$ and a cone neighborhood $\Gamma_{\xi_0}$ of $\xi_0$ for which (\ref{sftkrn137}) is valid for
$\xi\in\Gamma_{\xi_0}$. There exists a compact neighborhood
$\widetilde K$ of $\tilde y_0$ where $\chi$ never vanishes. Fix a cone neighborhood $\Gamma$ of $\xi_0$ such that $\overline{\Gamma}\subseteq \Gamma_{\xi_0}\cup\{0\}$.
 Following  the proof of Lemma 8.1.1 in \cite{hor} one can show that there exist $N\in\NN$  and $\psi\in\DD^{\alpha}(\widetilde K)$  such that $|\mathcal{F}(\psi\chi f)(\xi)|\leq C_{N,\psi,\chi}e^{-N|\xi|^{1/\alpha}}$,
 $\forall \xi\in\ \Gamma$. Then $(ii)$   follows since
 $\psi f=(\psi/\chi)\chi f$ where $\psi/\chi\in\mathcal D^{\alpha}(\widetilde K)$.

$(ii)\Rightarrow (iii)$ By $(ii)$, (\ref{sftkrn137}) implies that there exist $N\in\NN$, the set
 $H_N=\{e^{-N|\xi|^{1/\alpha}} e^{-i\xi \cdot}f|\, \xi\in \Gamma\}$ is weakly bounded in
 $\mathcal D'^{\alpha}(\widetilde B)$. So it is equicontinuous as
 $D^{\alpha}(\widetilde B)$ is barrelled. Let $\widetilde K=\widetilde B_{\widetilde y_0}(r/2)$. For each
 $\chi\in\mathcal D^{\alpha}(\widetilde K-\{y_0\})$ and $\widetilde y\in \widetilde K$ the function
 $$\widetilde t\mapsto \chi(\widetilde t-\widetilde y)$$ is in $\mathcal D^{\alpha}(\widetilde B)$ and the equicontinuity of $H_N$ implies the existence of $C_N>0$ and $h>0$
   such that
\beqs
|\langle e^{-i\xi t} f(t),\overline{\chi(\widetilde t-\widetilde y)}\rangle|&\leq& C_N
e^{-N|\xi|^{1/\alpha}}\sup_{p\in\N_0^n,\widetilde t,\widetilde y\in\widetilde K,}\
\frac{h^{|p|}}{p!^{\alpha}}|D^{m}\chi(\widetilde t-\widetilde y)|\\
&=&C_N\sup_{p\in\N_0^n}\frac{h^{|p|}}{p!^\alpha}\|D^{p}\chi\|_{L^{\infty}(\RR^d)}
e^{-N|\xi|^{1/\alpha}},\,\, \forall \xi\in\Gamma,\, \forall \widetilde y\in \widetilde K,
\eeqs
which implies the validity of $(iii)$.

 $(iii)\Rightarrow (iv)$ is simple and skipped.

 Using the estimate in $(iv)$ with $\widetilde y=\widetilde y_0$, $(iv)\Rightarrow (i)$ simply follow. This completes the proof.
\end{proof}


{Acknowledgements}: {This paper was
supported by the project "\textit{Time-frequency methods}", No. 174024 financed by the
Ministry of Science, Republic of Serbia,  by the project "\textit{Localization in phase space: theoretical and numerical aspects}", No. 19.032/961-103/19 funded by MNRVOID Republic of Srpska}  and by the bilateral project “\textit{Microlocal analysis and applications}" between the Macedonian and Serbian academies of sciences and arts.


\begin{thebibliography}{99}
\bibitem{APS}  S. Atanasova, S. Pilipovi\'{c}, K. Saneva, \textit{Directional Time–Frequency Analysis and Directional Regularity}, Bull. Malays. Math. Sci. Soc., \textbf{42} (2019), 2075--2090.

 \bibitem{CKP} R. Carmichael, A. Kami\'{n}ski, S. Pilipovi\'{c}, \textit{Boundary Values and Convolution in Ultradistribution Spaces}, World Scientific, Singapore, 2007.


 \bibitem{Coriasco1} S. Coriasco, K. Johansson, J. Toft, \textit{Local wave-front sets of Banach and Fréchet types, and pseudo-differential operators}, Monatsh Math, \textbf{169} (2013), 285–316.

 \bibitem{Coriasco2} S. Coriasco, K. Johansson, J. Toft, \textit{Global Wave-front Sets of Banach, Fréchet and Modulation Space Types, and Pseudo-differential Operators}, J. Differential Equations, \textbf{254}(8) (2013), 3228--3258.

\bibitem{DP} P. Dimovski, B. Prangoski, \textit{Wave front sets with respect to Banach spaces of ultradistributions. Characterisation via the short-time fourier transform}, Filomat, \textbf{33}(18) (2019), 5829--5836.

\bibitem{JPTT} K. Johansson, S. Pilipovi\'{c}, N. Teofanov, J. Toft, \textit{A Note on Wave-front Sets of Roumieu Type Ultradistributions.} In: Molahajloo S., Pilipović S., Toft J., Wong M. (eds) Pseudo-Differential Operators, Generalized Functions and Asymptotics. Operator Theory: Advances and Applications, vol 231. Birkhäuser, Basel, 2013

\bibitem{JPTT1} K. Johansson, S. Pilipovi\'{c}, N. Teofanov, J. Toft, \textit{Micro-local analysis in some spaces of ultradistributions.} Publications de l'Institut Mathématique (Beograd), \textbf{92}(106) (2012), 1--24.

 \bibitem{Giv} H. H. Giv, \textit{Directional short-time Fourier transform}, J. Math. Anal. Appl,  \textbf{399} (2013), 100--107.

\bibitem{GS} I. M. Gelfand, G. E. Shilov, \textit{Generalized functions}, Vols II and III, Academic Press, 1967.

 \bibitem{grafakos} L. Grafakos, C. Sansing, \textit{Gabor frames and directional time-frequency analysis}, Appl. Comput. Harmon. Anal.,  \textbf{25}(1) (2008), 47--67.

 \bibitem{Gr} K. Gr\"{o}chenig, \textit{Foundations of time-frequancy analysis}, Birkhauser Boston, Inc., Boston, MA, 2001.


\bibitem{hor} L. H\"{o}rmander, \textit{The Analysis of Linear Partial Differential Operators I}, second edition, Springer-Verlag, Berlin Heidelberg New York, 1990.


\bibitem{KPP} A. Kaminski, D. Peri\v{s}i\'{c}, S. Pilipovi\'{c}, \textit{Integral transforms on tempered ultradistributions}, Demonstr. Math. \textbf{33}(2000), 641--655.

\bibitem{Kom73} H. Komatsu,\textit{Ultradistributions, I: Structure theorems and a characterization}, J. Fac. Sci. Univ. Tokyo Sect. IA Math. \textbf{20},  1973.

\bibitem{Pil} S. Pilipovi\'{c},\textit{Bounded sets in the spaces of tempered ultradistributions}, Proc. Amer. Math. Soc., \textbf{120} (1994), 1191--1206.

\bibitem{PP} S. Pilipovi\'{c}, B. Prangoski, \textit{On the characterization   of wave front sets via the short-time Fourier transform}, Math. Notes
\textbf{105} (2019), 153--157.

\bibitem{Pil1} S. Pilipovi\'{c}, D. Raki\'{c}, N. Teofanov, J. Vindas, \textit{The Wavelet transforms in Gelfand-Shilov spaces}, Collect. Math. \textbf{67} (2016), 443--460.

\bibitem{KS} K. Saneva, S. Atanasova, \textit{Directional short-time Fourier transform of distributions}, J Inequal Appl, \textbf{124}(1) (2016), 1--10.

\bibitem{T1} N. Teofanov, \textit{Gelfand-Shilov spaces and localization operators}, Functional Analysis, Approximation and Computation \textbf{7}(2) (2015), 135--158.
\end{thebibliography}
\end{document}